\numberwithin{equation}{section}
\numberwithin{figure}{section}
\theoremstyle{plain}
\newtheorem {satz}{Theorem}[section]
\newtheorem {lemma}[satz]{Lemma}
\newtheorem {prop}[satz]{Proposition}
\newtheorem {kor}[satz]{Corollary}
\newtheorem {conj}[satz]{Conjecture}
\theoremstyle{definition}
\newtheorem {deff}[satz]{Definition}
\theoremstyle{remark}
\newtheorem {remark}[satz]{Remark}
\begin{document}

\date{}

\title{\Large {\bf Combinatorial $3$-manifolds with transitive cyclic symmetry}}

\author{Jonathan Spreer}

\maketitle

\subsection*{\centering Abstract}

{\em
	In this article we give combinatorial criteria to decide whether a transitive cyclic combinatorial $d$-manifold can be generalized to an infinite family of such complexes, together with an explicit construction in the case that such a family exists. In addition, we substantially extend the classification of combinatorial $3$-manifolds with transitive cyclic symmetry up to $22$ vertices. Finally, a combination of these results is used to describe new infinite families of transitive cyclic combinatorial manifolds and in particular a family of neighborly combinatorial lens spaces of infinitely many distinct topological types. 
}\\
\\
\textbf{MSC 2010: }
57Q15;  
57N10; 
57M05 
\\
\textbf{Keywords: } combinatorial $3$-manifold, transitive cyclic symmetry, transitive automorphism group, fundamental group, simplicial complexes, difference cycles, lens spaces

\section{Introduction}

In combinatorial topology, the principal objects of study are {\em combinatorial manifolds} which are abstract simplicial complexes together with additional local constraints. In this way, topological information is encoded purely combinatorially making the field of topology accesible to algorithmic methods. As a consequence, one topological manifold has a large number of combinatorially distinct presentations and attention has to be paid to the choice of a suitable combinatorial manifold. One way to find combinatorial manifolds which are easy-to-handle and easy to analyze is to reduce a given combinatorial manifold in the number of its simplices using {\em bistellar moves} (see \cite{Bjoerner00SimplMnfBistellarFlips}), another method is to construct highly symmetric combinatorial manifolds which can be described efficiently just by the generators of its symmetry and a system of orbit representatives of the simplices.

Here we will focus on the latter technique. More precisely, we will look at $n$-vertex combinatorial manifolds which do not change under a vertex-shift of type $v \mapsto v+1 \mod n$. Such combinatorial manifolds will be called {\em cyclic}. These objects are interesting to investigate for several reasons. They account for the largest group of so-called {\em transitive combinatorial manifolds}, i.e. combinatorial manifolds which are globally defined by a local neighborhood of a single vertex. Cyclic combinatorial manifolds can be described using so-called {\em difference cycles} (cf. Definition \ref{def:diffCycle}), they are especially easy to analyze and easy to work with. Because of their easy combinatorial structure they allow theoretical proofs on a scale where non-symmetric combinatorial manifolds need complicated computer proofs or cannot be handled at all and other highly symmteric combinatorial manifolds fail to be easy to investigate because of the complicated structure of their symmetry.

As a consequence, cyclic combinatorial manifolds have been used to establish upper and lower bounds for combinatorial properties of simplicial complexes (see \cite{Kuhnel88CombDToriLargeSymm} for small triangulations of the $d$-torus and \cite{Kuehnel96PermDiffCyc} for a set of several infinite families of combinatorial manifolds due to K\"uhnel and Lassmann), to prove tightness of existing upper and lower bounds (see \cite[Theorem 5.5]{Kuehnel95TightPolySubm} for an infinite family of tight and minimal sphere bundles over the circle), and to provide example complexes that are easy to describe, efficient to store and easy to work with (see \cite[Theorem 4]{Altshuler71PolyhedralRealizationsTori} for the infinite family of the so-called Altshuler tori with dihedral automorphism group, \cite{Kuehnel85NeighbComb3MfldsDihedralAutGroup} for a $2$-neighborly\footnote{A simplicial complex is called {\em $k$-neighborly} if every $k$-tuple of vertices spans a face of the complex} infinite family of the $3$-dimensional Klein bottle, and \cite{Brehm09LatticeTrigE33Torus} for a $2$-neighborly infinite family of the $3$-torus). Altogether, cyclic combinatorial manifolds are a natural choice when looking for a combinatorial version of a topological manifold to work with. 

In addition, most of the applications mentioned not only need single cyclic combinatorial manifolds but infinite families of such complexes to prove their respective results. Thus, providing construction principles for such families is of particular interest. In Section \ref{sec:inf}, we will present a simple combinatorial criterion to decide if a cyclic $d$-dimensional combinatorial manifold can be modified in its description to give rise to an infinite number of cyclic $d$-dimensional combinatorial manifolds. In dimension $d=3$, we explain a similar condition which detects {\em all} cyclic combinatorial $3$-manifolds that can be generalized in this way.

Furthermore, in Section \ref{sec:class} we classify all cyclic combinatorial $3$-manifolds with up to $22$ vertices. Thus, we significantly extend the number of triangulations where the findings from Section \ref{sec:inf} can be applied in order to obtain further insights into the world of combinatorial $3$-manifolds. The classification provides small and easy-to-handle combinatorial manifolds of $67$ distinct topological $3$-manifolds which can be used as a canonical choice to work with these particular topological manifolds on a combinatorial basis.

The classification itself is only the latest one in a line of other classifications dating back almost $30$ years. Neighborly combinatorial $3$-manifolds with dihedral automorphism group with up to $19$ vertices as well as cyclic neighborly combinatorial $3$-manifolds with up to $15$ vertices have already been classified by K\"uhnel and Lassmann in 1985, see \cite{Kuehnel85NeighbComb3MfldsDihedralAutGroup}. In 1999, a more general classification of all transitive combinatorial manifolds with up to $13$ vertices was presented by Lutz in \cite{Lutz03TrigMnfFewVertVertTrans} (which also contains a classification of all transitive combinatorial $d$-manifolds up to $15$ vertices in the cases $d \leq 3$ and $d \geq 9$). More recently, Lutz extended the classification of transitive combinatorial $2$-manifolds up to $21$ vertices (cf. \cite{Lutz09EquivdCovTrigsSurf}) and the classification of transitive combinatorial $3$-manifolds up to $17$ vertices (cf. \cite{Lutz11TrigMnflds}). These classification results have been proved to be useful as an extended set of small example triangulations with interesting properties \cite{Lutz08ManifoldPage}, to obtain additional insights into upper bounds on minimal triangulations of $3$-manifolds \cite{Lutz08FVec3Mnf} and as a starting point to find infinite families of combinatorial manifolds \cite{Kuehnel85NeighbComb3MfldsDihedralAutGroup}.

Finally, the classification together with the results from Section \ref{sec:inf} sets the ground work for many more results of similar nature. Most of this work is still in progress and will be presented separately in \cite{Spreer12VarCyclicPolytopeCompExpI}. As a preview of this work we present an infinite family of cyclic combinatorial $3$-manifolds of pairwise topologically distinct lens spaces in Section \ref{sec:lensspace}. 

\begin{remark}
	\label{rem:infFam}
	At this point it is important to stress that there is no canonical notion of an {\em infinite family of cyclic combinatorial manifolds}: any infinite set of cyclic combinatorial manifolds defines such a family. However, in order to be useful for applications such a family should be i) easy to describe and ii) easy to generate, i.e. given in precise terms of its combinatorial structure. In order to meet these requirements, we will focus on generalizations of cyclic combinatorial manifolds via arithmetic progressions for the entries of the orbit representatives of the respective complexes and will refer to the resulting infinite sets of cyclic combinatorial manifolds as {\em infinte families}.
\end{remark}

\section{Preliminaries}
\label{sec:prelims}

An {\em abstract simplicial complex}, $C$, can be seen as a combinatorial structure consisting of tuples $\langle a_0 , a_1, \ldots , a_d \rangle$, $a_i \in \mathbb{Z}_n$, $0 \leq i \leq d$, where the $n$ elements of $\mathbb{Z}_n$ are referred to as the vertices of the complex (cf.\ \cite{Kuehnel96PermDiffCyc}). The {\em automorphism group}, $\operatorname{Aut}(C)$, of $C$ is the group of all permutations $\sigma \in S_n$ of the vertices of $C$ which do not change $C$ as a whole. If $\operatorname{Aut}(C)$ acts transitively on the vertices, $C$ is called a {\em transitive} simplicial complex. If a transitive simplicial complex is invariant under the cyclic $\mathbb{Z}_n$-action $v \mapsto v+1 \mod n$, i.\ ~e.\ if for a complex $C$, possibly after a relabeling of the vertices, $\mathbb{Z}_n = \langle ( 0, 1, \ldots , n-1 ) \rangle$ is a subgroup of $\operatorname{Aut}(C)$ then $C$ is called a {\em cyclic} simplicial complex or a complex with {\em transitive cyclic symmetry}. 

For cyclic simplicial complexes we have the following situation: Since the whole complex does not change under a vertex-shift of type $v \mapsto v+1 \mod n$, two tuples are in one orbit of the cyclic group action if and only if the differences modulo $n$ of its vertices are equal. Hence, we can compute a system of orbit representatives by just looking at the differences modulo $n$ of the vertices of all tuples of the simplicial complex. This motivates the following definition.

\begin{deff}[Difference cycle]
	\label{def:diffCycle}
	Let $a_i$, $0 \leq i \leq d$, be positive integers such that $ n := \sum_{i=0}^{d} a_i$. The simplicial complex
	\begin{equation*}
		( a_0 : \ldots : a_{d} ) := \mathbb{Z}_n \langle 0 , a_0 , \ldots , \Sigma_{i=0}^{d-1} a_i \rangle
	\end{equation*}
	is called {\em difference cycle of dimension $d$ on $n$ vertices} where $G \langle \cdot \rangle$ denotes the $G$-orbit of $\langle \cdot \rangle$. The number of elements of $(a_0 : \ldots : a_d)$ is referred to as the {\em length} of the difference cycle and a difference cycle of length $n$ is said to be of {\em full length}.

	If a simplicial complex $C$ is a union of difference cycles of dimension $d$ on $n$ vertices and $\lambda$ is a unit of $\mathbb{Z}_n$ such that the complex $\lambda C$ (obtained by multiplying all vertex labels by $\lambda$ modulo $n$) equals $C$, then $\lambda$ is called a {\em multiplier} of $C$.
\end{deff}

Note that for any unit $\lambda \in \mathbb{Z}_n^{\times}$, the complex $\lambda C$ is combinatorially isomorphic to $C$. In particular, all $\lambda \in \mathbb{Z}_n^{\times}$ are multipliers of the complex $\bigcup_{\lambda \in \mathbb{Z}_n^{\times}} \lambda C$ by construction.  The definition of a difference cycle above is equivalent to the one given in \cite{Kuehnel96PermDiffCyc}. 

Difference cycles as well as other cyclic combinatorial structures have been thoroughly investigated under purely combinatorial aspects (see for example \cite[Part V]{Lindner80CyclicSteinerSystems} for a work on cyclic Steiner systems in the field of design theory, see \cite[Section III.4]{Emch29TripleQuadrSystems} for the highly symmetric {\em Emch design}). However, interpreting these well-known combinatorial structures as simplicial complexes and hence geometric objects is a relatively new development (see \cite{Kuehnel98TopAsp2foldTripSys} for an interpretation of two-fold triple systems as singular surfaces or \cite[Section 5.4]{Spreer10Diss} for an interpretation of Emch's design as the quotient of a hyperbolic tesselation of $3$-space).

\medskip
A {\em combinatorial manifold} is a special kind of simplicial complex which is defined as follows: An abstract simplicial complex $M$ is said to be {\em pure}, if all of its tuples are of length $d+1$, where $d$ is referred to as the {\em dimension} of $M$. If, in addition, any vertex link of $M$, i.e. the boundary of a simplicial neighborhood of a vertex of $M$, is a triangulated $(d-1)$-sphere endowed with the standard piecewise linear structure, $M$ is called a {\em combinatorial $d$-manifold}. Throughout this article, we will describe {\em cyclic combinatorial manifolds} as a set of difference cycles. In this way, many problems dealing with cyclic combinatorial manifolds can be solved in an elegant way.

One of the principal tools to analyze combinatorial manifolds is the use of a discrete Morse type theory following Kuiper, Banchoff and K\"uhnel \cite{Kuiper71MorseRelations,Banchoff67CritPntCurvEmbPoly,Banchoff83CritPtsCurvEmbPolyhII,Kuehnel95TightPolySubm}. In this theory, the discrete analogue of a Morse function is given by a mapping from the set of vertices $V$ of a combinatorial manifold $M$ to the real numbers $\mathbb{R}$ such that no two vertices have the same image, in this way inducing a total ordering on $V$. This mapping can then be extended to a function $ f: M \to \mathbb{R} $  by linearly interpolating the values of the vertices of a face of $M$ for all points inside that face. $f$ is called a {\em regular simplexwise linear function} or {\em rsl-function} on $M$.

A point $x \in M$ is said to be {\em critical} for an rsl-function $f:M \to \mathbb{R}$ if \[H_{\star} (M_x , M_x \backslash \{ x \} , \mathbb{F}) \neq 0 \] where $M_x := \{ y \in M \, | \, f(y) \leq f(x) \}$ and $\mathbb{F}$ is a field. Here, $H_{\star}$ denotes an appropriate homology theory. It follows that no point of $M$ can be critical except possibly the vertices. More precisely we call a vertex $v$ {\em critical of index $i$ and multiplicity $m$} if $\beta_i (M_v , M_v \backslash \{ v \} , \mathbb{F}) = m$. 

A result of Kuiper \cite{Kuiper71MorseRelations} states that the number of critical points of an rsl-function of $M$ counted by multiplicity is an upper bound for the sum of the Betti numbers of $M$, hence extending the famous {\em Morse relations} from the smooth theory to the discrete setting.

The pre-image of a point under an rsl-function which does not meet any vertex of the surrounding combinatorial manifold is called a {\em slicing}. By construction, a slicing is an embedded co-dimension $1$ submanifold which contains information about the topology of the surrounding manifold (see Figures \ref{fig:51}, \ref{fig:homSphere} and \ref{fig:HeegaardDiagram} for slicings in the case $d=3$, and \cite{Spreer10NormSurfsCombSlic} for further details about slicings).

\section{Infinite families of transitive cyclic combinatorial manifolds}
\label{sec:inf}

For cyclic combinatorial manifolds, one straightforward generalization of a given complex to an {\em infinite family of cyclic combinatorial manifolds} with increasing number of vertices can be constructed by finding a slicing of a cyclic combinatorial manifold which exhibits the symmetry of the surrounding manifold in a cyclic pattern, extending this pattern and then re-constructing a larger version of the original combinatorial manifold (cf. \cite[Section 4.5]{Spreer10Diss}). 

However, in this article we want to focus on when a combinatorial $d$-manifold given by a set of difference cycles can be generalized using arithmetic progressions for the entries of its difference cycles (cf. Remark \ref{rem:infFam}). More precisely, for a combinatorial manifold $M = \{ d_1 , \ldots , d_m \}$ with $n$ vertices represented by $m$ difference cycles $d_i = ( a_i^0 : \ldots : a_i^{d} )$, $1 \leq i \leq m$, we define complexes $M_k = \{ d_{1,k} , \ldots , d_{m,k} \}$, $k > - \min_{1 \leq i \leq m} \{ a_i^{d} \}$, with $n+k$ vertices and difference cycles $d_{i,k} = ( a_i^0 : \ldots : a_i^{d-1} : a_i^{d} + k )$, $1 \leq i \leq m$, and ask for a purely combinatorial condition on $M$ to check whether (and for which range of $k$) the complexes $M_k$ are combinatorial manifolds (see Theorem \ref{order2} below). We will start with discussing the case $d=3$, where the ``generalizability'' of a cyclic combinatorial manifold is even equivalent to the following combinatorial criterion.

\begin{satz}
	\label{main}
	Let $M = \{ d_1 , \ldots , d_m \}$ be a combinatorial $3$-manifold with $n$ vertices, represented by $m$ difference cycles $d_i = ( a_i^0 : a_i^1 : a_i^2 : a_i^{3} )$, $1 \leq i \leq m$. Then the complex $M_k$ is a combinatorial manifold for all $k \geq 0$ if and only if $a_i^{3} > \frac{n}{2}$ for all  $1 \leq i \leq m$.
\end{satz}

In order to prove Theorem \ref{main} let us first take a look at a few lemmas.

\begin{lemma}
	\label{lengthOfDC}
	Let $( a_0 : \ldots : a_{d} )$ be a difference cycle of dimension $d$ on $n$ vertices and $1 \leq k \leq d+1$ the smallest integer such that $k \mid (d+1)$ and $a_i = a_{i+k}$, $0 \leq i \leq d-k$. Then $( a_0 : \ldots : a_{d} )$ has length $\frac{nk}{d+1}.$
\end{lemma}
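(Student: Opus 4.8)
The plan is to compute the length as the size of the $\mathbb{Z}_n$-orbit of the generating simplex $\sigma = \langle v_0 , \ldots , v_d \rangle$ with $v_j = \sum_{i=0}^{j-1} a_i$, and to invoke the orbit-stabilizer theorem: the length equals $n / |\mathrm{Stab}(\sigma)|$, where $\mathrm{Stab}(\sigma) \le \mathbb{Z}_n$ is the subgroup of shifts $t$ satisfying $\{ v_0 + t , \ldots , v_d + t \} = \{ v_0 , \ldots , v_d \}$ (equality of vertex sets modulo $n$). Thus the whole problem reduces to showing $|\mathrm{Stab}(\sigma)| = (d+1)/k =: m$, after which the identity $\sum_{i=0}^{k-1} a_i = nk/(d+1)$ is pure arithmetic.

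First I would fix the geometric picture: the vertices $v_0 < v_1 < \cdots < v_d$ are marked points on the cyclic line $\mathbb{Z}_n$, and the consecutive (cyclic) gaps between them are exactly $a_0 , a_1 , \ldots , a_d$, with the wrap-around gap $n - v_d = a_d$. A shift $v \mapsto v + t$ is a rotation of $\mathbb{Z}_n$, hence preserves the cyclic order and every gap length. Therefore, if $t \in \mathrm{Stab}(\sigma)$, then $t$ carries $v_0 = 0$ to some marked vertex $v_j$ (so $t = v_j$) and consequently $v_i \mapsto v_{i+j}$ for all $i$ (indices mod $d+1$); matching the gap after $v_i$ with the gap after $v_{i+j}$ forces $a_i = a_{i+j}$ for all $i$, i.e.\ $j$ is a period of the cyclic sequence $(a_0 , \ldots , a_d)$.

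Next I would translate periods back into shifts. By the defining minimality of $k$, every period of $(a_0 , \ldots , a_d)$ is a multiple of $k$, so $j \in \{ 0 , k , 2k , \ldots , (m-1)k \}$; conversely each such $j = sk$ does yield a symmetry, since the hypothesis $k \mid (d+1)$ together with $a_i = a_{i+k}$ makes the gap sequence genuinely $k$-periodic. Writing $P := \sum_{i=0}^{k-1} a_i$, periodicity gives $v_{sk} = sP$, so the shift realizing the period $sk$ is exactly $t = sP$. Hence $\mathrm{Stab}(\sigma) = \{ 0 , P , 2P , \ldots , (m-1)P \} = \langle P \rangle$, a cyclic group of order $m$, because $mP = \sum_{i=0}^{d} a_i = n$ while $sP \ne 0$ for $0 < s < m$.

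Combining these, the length is $n / |\mathrm{Stab}(\sigma)| = n/m = nk/(d+1)$, and since $n = mP$ this equals $P = \sum_{i=0}^{k-1} a_i$, as claimed. The step I expect to require the most care is the exact determination of the stabilizer: arguing both that every stabilizing shift arises from a period (using that a rotation preserves the cyclic gap pattern and must send $0$ to a marked vertex) and that no period smaller than $k$ can occur (this is precisely where the minimality of $k$ in the hypothesis is used). The subsequent identities $v_{sk} = sP$ and the order count $|\langle P \rangle| = m$ are then routine.
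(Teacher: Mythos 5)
Your proof is correct and takes essentially the same approach as the paper: both exploit the $\mathbb{Z}_n$-action on the generating simplex and the fact that the shift by $P=\sum_{i=0}^{k-1}a_i = \frac{nk}{d+1}$ stabilizes it, so the orbit (length) is at most $\frac{nk}{d+1}$, with minimality of $k$ forcing equality. If anything, your determination of the full stabilizer --- every stabilizing shift must carry $0$ to a marked vertex and hence induce a cyclic period of the gap sequence, which by minimality must be a multiple of $k$ --- spells out the attainment direction that the paper's proof only asserts in one line.
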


\begin{proof}
	Since $\sum_{i=0}^{d} a_i = n$ and $a_i = a_{i+k}$, $0 \leq i \leq d-k$, we have $\sum_{i=0}^{d} a_i = \frac{d+1}{k} \sum_{i=0}^{k-1} a_i$ and hence $\sum_{i=0}^{k-1} a_i = \frac{nk}{d+1}$. Keeping this in mind we have (all entries are computed modulo $n$)
	\small
	\begin{eqnarray}
		\left \langle 0 + \tfrac{nk}{d+1}, a_0 + \tfrac{nk}{d+1}, \ldots , (\Sigma_{i=0}^{d-1} a_i) + \tfrac{nk}{d+1} \right \rangle &=& \left \langle \Sigma_{i=0}^{k-1} a_i , \Sigma_{i=0}^{k} a_i, \ldots , \Sigma_{i=0}^{d-1} a_i , 0 , a_0 , \ldots , \Sigma_{i=0}^{k-2} a_i \right \rangle \nonumber \\
	 &=& \left \langle 0 , a_0, \ldots , \Sigma_{i=0}^{d-1} a_i \right \rangle . \nonumber
	\end{eqnarray}
	\normalsize
	Hence, for the length $\ell$ of $( a_0 : \ldots : a_{d} )$ we have $\ell \leq \frac{nk}{d+1}$ and since $k$ is minimal with $k \mid (d+1)$ and $a_i = a_{i+k}$, the upper bound is attained.
\end{proof}

\begin{lemma}
	\label{link}
	Let $M_k$, $k \geq 0$, be an infinite family of cyclic combinatorial $3$-manifolds with $n+k$ vertices represented by the union of $m$ difference cycles of full length. Then we have for the $f$-vectors of the vertex links
	$$f \left (\operatorname{lk}_{M_0} (0)) \right ) = f \left (\operatorname{lk}_{M_k} (0)) \right ) = (2m+2, 6m, 4m) $$ 
	for all $k \geq 0$. In particular, the number of vertices of $\operatorname{lk}_{M_k} (0)$ does not depend on the value of $k$.	
\end{lemma}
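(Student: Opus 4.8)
The plan is to compute $f(\operatorname{lk}_{M_k}(0))$ by first counting its triangles directly, and then to deduce the numbers of edges and vertices from the fact that a vertex link in a combinatorial $3$-manifold is a triangulated $2$-sphere.

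First I would observe that the triangles of $\operatorname{lk}_{M_k}(0)$ are in bijection with the tetrahedra of $M_k$ containing the vertex $0$, so that $f_2(\operatorname{lk}_{M_k}(0))$ equals the number of such tetrahedra. To count these, fix one of the $m$ full-length difference cycles, say $(b_0 : b_1 : b_2 : b_3)$ with $b_0 + b_1 + b_2 + b_3 = n+k$, whose tetrahedra are the cyclic shifts $T_j := \langle j,\, j+b_0,\, j+b_0+b_1,\, j+b_0+b_1+b_2 \rangle$ for $j \in \mathbb{Z}_{n+k}$. Such a $T_j$ contains $0$ precisely when one of its four entries is $0$, i.e.\ for $j \in \{0,\, -b_0,\, -(b_0+b_1),\, -(b_0+b_1+b_2)\}$. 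Since the entries of a difference cycle are positive and $b_0 + b_1 + b_2 < n+k$, these four residues are pairwise distinct modulo $n+k$; and because the cycle has full length the map $j \mapsto T_j$ is injective, its orbit having $n+k$ elements (cf.\ Lemma \ref{lengthOfDC}), so the four shifts produce four distinct tetrahedra through $0$. Summing over the $m$ difference cycles, which are pairwise disjoint as distinct $\mathbb{Z}_{n+k}$-orbits, gives $f_2(\operatorname{lk}_{M_k}(0)) = 4m$, a number that visibly does not depend on $k$.

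Next I would invoke the standard incidence relations for a closed triangulated surface. As $\operatorname{lk}_{M_k}(0)$ is, by definition of a combinatorial $3$-manifold, a triangulated $2$-sphere, each edge lies in exactly two triangles while each triangle has three edges, so $2 f_1 = 3 f_2$ and hence $f_1 = 6m$. Combining this with the Euler relation $f_0 - f_1 + f_2 = 2$ gives $f_0 = 2 + 6m - 4m = 2m+2$. This yields $f(\operatorname{lk}_{M_k}(0)) = (2m+2,\, 6m,\, 4m)$ for every $k \geq 0$, which is the assertion.

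The one point requiring care, and the only place where the hypotheses really enter, is the claim that each full-length difference cycle contributes exactly four (and not fewer) tetrahedra through $0$. This relies on two facts working together: positivity of the entries $b_0, b_1, b_2$, which guarantees that the four candidate shift positions are genuinely distinct residues modulo $n+k$; and the full-length hypothesis, which guarantees that distinct shifts never collapse to the same tetrahedron. Once this count is established, independence of $k$ is immediate, since the final $f$-vector depends only on $m$ and on the fixed block size $d+1 = 4$, neither of which varies with $k$.
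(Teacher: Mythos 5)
Your proof is correct, and it establishes the key count $f_2(\operatorname{lk}_{M_k}(0)) = 4m$ by a genuinely different mechanism than the paper. The paper argues globally: full length gives $f_3(M_k) = m(n+k)$ tetrahedra in total, cyclicity forces every vertex to lie in the same number of tetrahedra, and a double count over the four vertices of each tetrahedron yields $\frac{4m(n+k)}{n+k} = 4m$ tetrahedra through any given vertex. You instead argue locally at the vertex $0$: for each full-length cycle $(b_0:b_1:b_2:b_3)$ you enumerate the shifts $j \in \{0, -b_0, -(b_0+b_1), -(b_0+b_1+b_2)\}$ whose tetrahedra contain $0$, using positivity of the entries to see these residues are pairwise distinct and the full-length hypothesis (via Lemma \ref{lengthOfDC}) to see that distinct shifts give distinct tetrahedra. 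The paper's double count is shorter and exploits the symmetry already in the hypotheses, but it leaves the role of full length somewhat implicit (it is what makes $f_3(M_k) = m(n+k)$ exact rather than an upper bound); your version isolates precisely where positivity and full length are needed, and it does not invoke vertex-transitivity at all for the triangle count, so it would apply verbatim to the link of any single vertex of any union of full-length difference cycles. From $f_2 = 4m$ onward the two proofs coincide: $2f_1 = 3f_2$ because the link is a closed surface, and the Euler relation for the $2$-sphere gives $f_0 = 2m+2$.
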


\begin{proof}
	Since $M_k$ is the union of $m$ difference cycles of full length, we have for the number of tetrahedra $f_{3} (M_k) = m(n+k)$ for all $k \geq 0$. Furthermore, as $M_k$ is cyclic, all vertices are contained in the same number of tetrahedra which have $4$ vertices. By the fact that any facet of $\operatorname{lk}_{M_k} (0)$ corresponds to a facet in $M_k$ containing $0$ it follows that for the number of triangles of the link $f_{2} (\operatorname{lk}_{M_k} (0)) = \frac{4m(n+k)}{n+k} = 4m$ holds, which is independent of $k$. Since for all $k \geq 0$ the link $\operatorname{lk}_{M_k} (0)$ is a combinatorial $2$-sphere, all edges of $\operatorname{lk}_{M_k} (0)$ lie in exactly two triangles, hence $f_{1} (\operatorname{lk}_{M_k} (0)) = 6m$. Finally, the Euler characteristic of the $2$-sphere is $2$, and by the Euler-Poincar\'e formula we have $f_{0} (\operatorname{lk}_{M_k} (0)) = 2m+2$.	
\end{proof}

Let us now come to the proof of Theorem \ref{main}.

\begin{proof}
	Let $M = \{ d_1 , \ldots , d_m \}$ be a combinatorial $3$-manifold with $n$ vertices, represented by $m$ difference cycles $d_i = ( a_i^0 : a_i^1 : a_i^2 : a_i^{3} )$, $1 \leq i \leq m$, such that $a_i^{3} > \frac{n}{2} > a_i^0 + a_i^1 + a_i^{2}$ for all  $1 \leq i \leq m$.
	For the link of vertex $0$ in $M$ we then have (all entries are computed modulo $n$)
	\begin{multline}
		\label{eq:sphere}
		\operatorname{lk}_M (0) \,\, = \,\, \bigcup \limits_{i=1}^{m} \,\, \left \{ 
			\langle a_i^0, a_i^0 + a_i^1, a_i^0 + a_i^1 + a_i^2 \rangle, \langle - a_i^0, a_i^1, a_i^1 + a_i^2 \rangle, \langle - a_i^0 - a_i^1 , - a_i^1 , a_i^2 \rangle, 
			\right . \\ \left . \langle - a_i^0 - a_i^1 - a_i^2 , - a_i^1 - a_i^2 , - a_i^2 \rangle \right \},
	\end{multline}
	and for the link of vertex $0$ in $M_k$ (all entries are computed modulo $n+k$)
	\begin{multline}
		\label{eq:maybeSphere}
		\operatorname{lk}_{M_k} (0) \,\, = \,\, \bigcup \limits_{i=1}^{m} \,\, \left \{ 
			\langle a_i^0, a_i^0 + a_i^1, a_i^0 + a_i^1 + a_i^2 \rangle, \langle - a_i^0, a_i^1, a_i^1 + a_i^2 \rangle, \langle - a_i^0 - a_i^1 , - a_i^1 , a_i^2 \rangle, 
			\right . \\ \left . \langle - a_i^0 - a_i^1 - a_i^2 , - a_i^1 - a_i^2 , - a_i^2 \rangle \right \}.
	\end{multline}
	Since $M$ is a combinatorial $3$-manifold, (\ref{eq:sphere}) must be a triangulated $2$-sphere. Since $a_i^{3} > \frac{n}{2} > a_i^0 + a_i^1 + a_i^{2}$ for all $1 \leq i \leq m$, the vertices $v_j$ of $\operatorname{lk}_M (0)$ can be mapped to the vertices of $\operatorname{lk}_{M_k} (0)$, $k \geq 0$, as follows:
	$$ v_j \mapsto \left\{ \begin{array}{ll} v_j & \textrm{ if } v_j < \frac{n}{2} \\ v_j + k & \textrm{ if } v_j \geq \frac{n}{2} . \end{array} \right.$$
	Applying this relabeling to the vertices of $M$ yields a simplicial complex on vertices of $M_k$ equal to (\ref{eq:maybeSphere}) and hence a combinatorial isomorphism between $\operatorname{lk}_M (0)$ and $\operatorname{lk}_{M_k} (0)$. Since $M$ and $M_k$ are cyclic, all vertex links are isomorphic. Altogether it follows that $M_k$ is a combinatorial manifold for all $k \geq 0$. 
	
	This part of the proof can be generalized to combinatorial $d$-manifolds for arbitrary $d$, see Theorem \ref{order2}.  
	
	\medskip
	Conversely, let $M = \{ d_1 , \ldots , d_m \}$ contain a difference cycle $d_i = ( a_i^0 : a_i^1 : a_i^2 : a_i^{3} )$, $1 \leq i \leq m$, such that $a_i^{3} \leq \frac{n}{2}$ and let $\tilde{k} := \underset{1 \leq j \leq m}{\operatorname{max}}( a_j^0 + a_j^1 + a_j^{2} - a_j^3 )$. Since by construction $a_j^3 + \tilde{k} \geq a_j^0 + a_j^1 + a_j^{2}$ and $a_j^l > 0$ for all $1 \leq j \leq m$, $0 \leq l \leq 3$, it follows by Lemma \ref{lengthOfDC} that all difference cycles of $M_{\tilde{k}}$ and $M_{\tilde{k}+1}$ have full length. By Lemma \ref{link} it now follows that the links of vertex $0$ in $M_{\tilde{k}}$ and $M_{\tilde{k}+1}$ have the same $f$-vector. On the other hand, since $a_i^{3} + \tilde{k} = a_i^0 + a_i^1 + a_i^{2}$ for at least one $1 \leq i \leq m$ and $a_j^{3} + \tilde{k} \geq a_j^0 + a_j^1 + a_j^{2}$ for all $1 \leq j \leq m$, $\operatorname{lk}_{M_{\tilde{k}+1}}(0)$ has to have strictly more vertices than the link of vertex $0$ in $M_{\tilde{k}}$: Namely, we have $- \Sigma_{r=0}^{2} a_i^{r} = \Sigma_{r=0}^{2} a_i^{r}$ for at least one $1 \leq i \leq m$ in $\operatorname{lk}_{M_{\tilde{k}}}(0)$, and in $\operatorname{lk}_{M_{\tilde{k}+1}}(0)$ this vertex splits into two distinct vertices. On the other hand, two vertices which are distinct in $\operatorname{lk}_{M_{\tilde{k}}}(0)$ cannot be equal in $\operatorname{lk}_{M_{\tilde{k}+1}}(0)$ by $a_j^{3} + \tilde{k} \geq a_j^0 + a_j^1 + a_j^{2}$ for all $1 \leq j \leq m$. This is a contradiction to Lemma \ref{link}. 
\end{proof}

From now on, we will require an infinite family of cyclic combinatorial manifolds to start with the smallest complex possible, that is, the complex $M_{-1}$ must not be a combinatorial manifold.

\begin{kor}
	Let $M_k$, $k \geq 0$, be an infinite family of cyclic combinatorial $3$-manifolds such that $M_{-1}$ is not a combinatorial manifold, then $M_0$ has an odd number of vertices. 
\end{kor}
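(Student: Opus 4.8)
The plan is to prove the contrapositive: \emph{if $M_0$ has an even number of vertices, then $M_{-1}$ is already a combinatorial manifold}, contradicting the standing hypothesis that $M_{-1}$ is not one. Write $n$ for the number of vertices of $M_0$, and for each difference cycle $d_i = ( a_i^0 : a_i^1 : a_i^2 : a_i^3 )$ (ordered so that $a_i^3$ is the largest entry) set $s_i := a_i^0 + a_i^1 + a_i^2 = n - a_i^3$. Since $M_k$, $k \geq 0$, is an infinite series of combinatorial $3$-manifolds, the ``only if'' direction of Theorem \ref{main} gives $a_i^3 > s_i$ for every $i$, i.e.\ $2 s_i < n$.

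First I would invoke the explicit description of $\operatorname{lk}_{M_0}(0)$ used in the proof of Theorem \ref{main}: it is the union over $i$ of four triangles whose vertices are the signed partial sums of $a_i^0, a_i^1, a_i^2$. As integers these vertices all lie in the interval $[-s_i, s_i]$ and never involve $a_i^3$. Because $2 s_i < n$, reduction modulo $n$ is injective on $[-s_i, s_i]$, so the positive partial sums occupy residues in $(0, n/2)$ and the negative ones residues in $(n/2, n)$; in particular no link vertex equals $0$ or $n/2$, and the two ranges are disjoint.

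Next I would pass to $M_{-1}$, whose difference cycles are $( a_i^0 : a_i^1 : a_i^2 : a_i^3 - 1 )$ on $n-1$ vertices; its link $\operatorname{lk}_{M_{-1}}(0)$ is given by the very same signed partial sums, now read modulo $n-1$. The decisive numerical point is that when $n$ is \emph{even}, the inequality $a_i^3 > s_i$ forces $s_i \leq n/2 - 1$, hence $2 s_i \leq n-2 < n-1$; thus reduction modulo $n-1$ is still injective on $[-s_i, s_i]$ and no collision occurs. Consequently the map $v \mapsto v$ for $0 < v < n/2$ and $v \mapsto v - 1$ for $n/2 < v < n$ --- the same relabelling used in Theorem \ref{main}, now with $k = -1$ --- is a well-defined simplicial isomorphism $\operatorname{lk}_{M_0}(0) \cong \operatorname{lk}_{M_{-1}}(0)$: positive partial sums $u \le s_i$ are fixed, and a negative sum $n - t$ of $M_0$ (with $0 < t \le s_i$) is sent to $(n-1) - t$, which is exactly its residue in $M_{-1}$. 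Since $M_0$ is a combinatorial $3$-manifold, $\operatorname{lk}_{M_0}(0)$ is a combinatorial $2$-sphere, hence so is $\operatorname{lk}_{M_{-1}}(0)$; and because $M_{-1}$ is cyclic, all of its vertex links are isomorphic to this one, so $M_{-1}$ is a combinatorial manifold --- the desired contradiction. Therefore $n$ must be odd.

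The step I expect to require the most care is the injectivity bookkeeping that upgrades the relabelling to a genuine simplicial isomorphism: one must verify that $0$ and $n/2$ are never link vertices and that the residue ranges $(0, n/2)$ and $(n/2, n)$ remain disjoint after reduction modulo $n-1$. This is precisely where the parity is used. For even $n$ one gains the strict slack $2 s_i \leq n-2 < n-1$, whereas for odd $n$ the extremal difference cycles with $a_i^3 = s_i + 1$ satisfy $2 s_i = n-1$, so the two partial sums $\pm s_i$ both reduce to the vertex $(n-1)/2$ of $M_{-1}$; the relabelling then fails to be injective and --- consistently with the statement --- $M_{-1}$ need not, and by minimality does not, triangulate a manifold.
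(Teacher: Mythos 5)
Your proof is correct and takes essentially the same route as the paper: the paper's own argument deduces from Theorem \ref{main} that $\Delta_i := a_i^3 - (a_i^0+a_i^1+a_i^2) \geq 1$ for all $i$, observes that if every $\Delta_i \geq 2$ then $M_{-1}$ is a combinatorial manifold (contradicting minimality), and concludes that some $\Delta_i = 1$, whence $n = a_i^3 + (a_i^3-1)$ is odd --- which is precisely your parity bookkeeping $2s_i \leq n-2$ read in the contrapositive. The only difference is presentational: you re-derive the vertex-relabelling isomorphism $\operatorname{lk}_{M_0}(0) \cong \operatorname{lk}_{M_{-1}}(0)$ explicitly for $k=-1$, where the paper simply cites Theorem \ref{main} applied to $M_{-1}$.
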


\begin{proof}
	This follows immediately from the fact that $\Delta_j := a_j^3 - a_j^0 - a_j^1 - a_j^2 > 0$ for all $1 \leq j \leq m$ in $M_0$. If the mimimum over all $\Delta_j$, $1\leq j \leq m$, is strictly greater than $1$, the vertices of $\operatorname{lk}_{M_0}(0)$ are either strictly smaller than $\lfloor \frac{n}{2} \rfloor$ or strictly greater than $\lceil \frac{n}{2} \rceil$. Following the proof of Theorem \ref{main}, the mapping
$$ v_j \mapsto \left\{ \begin{array}{ll} v_j & \textrm{ if } v_j < \frac{n}{2} \\ v_j - 1 & \textrm{ if } v_j \geq \frac{n}{2} + 1 . \end{array} \right.$$
of the vertices of $\operatorname{lk}_{M_0}(0)$ to the vertices of $\operatorname{lk}_{M_{-1}}(0)$ yields a combinatorial isomorphism between the vertex links of $M_0$ and $M_{-1}$ and hence $M_{-1}$ is a combinatorial $3$-manifold. Hence, $\Delta_i = 1$ for some $1 \leq i \leq m$ and $n = 2 a_i^{3} - 1$. 
\end{proof}

\medskip
So far, we have only considered one particular type of arithmetic progression which led to infinite families of cyclic combinatorial manifolds that have members for all integers $n \geq n_0$ for $n_0$ sufficiently large. A more general but closely related approach results in other (weaker) formulations of infinite families of cyclic combinatorial manifolds: Let $N = \{ d_1 , \ldots , d_m \}$ be a combinatorial $d$-manifold with $n$ vertices, represented by $m$ difference cycles $d_i = ( a_i^0 : \ldots : a_i^{d} )$, $1 \leq i \leq m$. The simplicial complexes $N_k = \{ d_{1,k}, \ldots , d_{m,k} \}$, $k \geq 0$, with $n + \ell k$ vertices, $\ell \geq 1$ fixed, given by $d_{i,k} = ( a_{i}^0 + \ell_{i}^0 k  : \ldots : a_{i}^{d} + \ell_{i}^{d} k )$, $1 \leq i \leq m$, where for each $1 \leq i \leq m$ we have $\sum_{j=0}^{d} \ell_{i}^{j} = \ell$, $\ell_i^j \geq 0$, will be called an {\em infinite family of cyclic combinatorial manifolds of order $\ell$} if all $N_k$, $k\geq 0$, are combinatorial manifolds. The case $\ell=1$ coincides with the previously described type of infinite family which from now on will be referred to as a {\em dense} family.

\medskip
There is an analogue of the ``if''-part of Theorem \ref{main} for infinite families of combinatorial $d$-manifolds of order $\ell$ which can be formulated as follows.

\begin{satz}
	\label{order2}
	Let $N = \{ d_1 , \ldots , d_m \}$ be a combinatorial $d$-manifold with $n$ vertices, represented by $m$ difference cycles $d_i = ( a_i^0 : \ldots : a_i^{d} )$, $1 \leq i \leq m$.
	Then $N_k$, defined by non-negative integers $\ell \geq 1$ and $\ell_i^j$, $1 \leq i \leq m$, $0 \leq j \leq d$ with $\sum_{j=0}^{d} \ell_{i}^{j} = \ell$, $1 \leq i \leq m$, is a combinatorial $d$-manifold with $n + \ell k$ vertices for all $k \geq 0$ if
	\begin{equation}
		\label{eq:condition}
		\frac{\ell_i^j n}{\ell+1} < a_i^{j},
	\end{equation}
	holds for all $1 \leq i \leq m$, $0 \leq j \leq d$.
\end{satz}

Note that the case $d=3$ and $\ell=1$ in the following proof corresponds to the ``if''-part of Theorem \ref{main}.

\begin{proof}
	The proof is completely analogous to the one of the first part of Theorem \ref{main}. Here, too, we look at a relabeling of the vertices of the link $\operatorname{lk}_{N} (0)$ in order to transform it to $\operatorname{lk}_{N_k} (0)$ for arbitrary values of $k \geq 0$. 

	First note that from Condition (\ref{eq:condition}) we can derive 
	$$ a_i^{j} = n - \sum \limits_{p = 0, p \neq j}^{d} a_i^p < n - \sum \limits_{p = 0, p \neq j}^{d} \frac{\ell_i^p n}{\ell+1} = \frac{n}{\ell+1} \left ( \ell +1 - \sum \limits_{p = 0, p \neq j}^{d} \ell_i^p \right ) = \frac{(\ell_i^j +1)n}{\ell+1}, $$
	and hence we have
	\begin{equation}
		\label{eq:condition2}
		\frac{\ell_i^j n}{\ell+1} < a_i^{j} < \frac{(\ell_i^j +1)n}{\ell+1}.
	\end{equation}
	Now, consider the collection of disjoint intervals 
	$$\alpha_r^k := \left ] \frac{r(n+\ell k)}{\ell+1}, \frac{(r+1)(n+ \ell k)}{\ell+1} \right[ \,\, \subset \,\, ] 0 , n+ \ell k [ \, \subset \, \mathbb{R} .$$
	By Condition (\ref{eq:condition2}), each vertex of $\operatorname{lk}_{N_k} (0)$ lies in exactly one of the $\alpha_r^k$, $0 \leq r \leq \ell$. Now consider the relabeling
	$$ \iota : v_j \mapsto v_j +  \left \lfloor \frac{(\ell+1) v_j}{n} \right \rfloor k  $$
	from the vertices of $\operatorname{lk}_{N} (0)$ to the vertices of $\operatorname{lk}_{N_k} (0)$. By construction, if $v_j \in \alpha_r^0$ then $\iota (v_j) \in \alpha_r^k$ and, following the proof of Theorem \ref{main}, $\iota$ is injective and defines a combinatorial isomorphism between $\operatorname{lk}_{N} (0)$ and $\operatorname{lk}_{N_k} (0)$.
\end{proof}

Theorem \ref{order2} defines families of order $\ell$ by a purely combinatorial criterion. Since all dense families contain families of order $\ell$, the following characterisation of higher order families is interesting.

\begin{lemma}
	\label{lem:contained}
	Let $N_k = (d_{1,k} , \ldots , d_{m,k})$, $k \geq 0$, be an infinite family of combinatorial $d$-manifolds of order $\ell$, $1 \leq \ell \leq d$, with $n+\ell k$ vertices given by non-negative integers $\ell_i^j$, $1 \leq i \leq m$, $0 \leq j \leq d$, $\sum_{j=0}^{d} \ell_i^{j} = \ell$.

	If $\ell$ is a unit in $\mathbb{Z}_n$ then there exists a dense infinite family containing all but finitely many members of $N_k$, $k \geq 0$.
\end{lemma}

\begin{proof}
	Let $\ell$ be a unit in $\mathbb{Z}_n$ and let $a_{i,k}^{j}$ be the $j$-th entry of the $i$-th difference cycle of $N_k$.
	By multiplying $N_k$ by $\ell$ we get $\ell N_k = \{ (\ell a_{1,k}^{0} : \ldots : \ell a_{1,k}^{d}) , \ldots , (\ell a_{m,k}^{0} : \ldots : \ell a_{m,k}^{d}) \}$ 
	which is isomorphic to $N_k$ since if $\ell$ is a unit in $\mathbb{Z}_n$ then $\ell$ is a
	unit in $\mathbb{Z}_{n + \ell k}$, for all $k \geq 0$. Hence, we have
	\small
	\begin{eqnarray}
		\ell a_{i,k}^{j} & = & \ell (a_i^{j} + \ell_i^j k) \nonumber \\
		& = & \ell a_i^{j} + \ell \ell_i^j k \nonumber \\
		& = & \ell a_i^{j} + \ell_i^j (\ell k - (n+\ell k)) \nonumber \\
		& = & \ell a_i^{j} - \ell_i^j n \nonumber 
	\end{eqnarray}
	\normalsize
	which is independent of $k$. Now let $b_i^j := \Sigma_{r=0}^{j} \ell a_i^{r} - \ell_i^r n$ and for each $1 \leq i \leq m$ let $\pi_i$ be a permutation such that
	$$ b_{i}^{\pi_i(0)} \leq b_{i}^{\pi_i(1)} \leq \ldots \leq b_{i}^{\pi_i(d)}. $$
	It follows that for $k > b_{i}^{\pi_i(d)} - b_{i}^{\pi_i(0)} - n$ the difference cycle
	$(\tilde{a}_i^0 : \tilde{a}_i^1 : \ldots : \tilde{a}_i^{d-1} : \tilde{a}_i^{d} + \ell k)$ with $\tilde{a}_i^j := b_{i}^{\pi_i(j+1)} - b_{i}^{\pi_i(j)}$, $0 \leq j \leq d-1$,
	and $\tilde{a}_i^d = n - \Sigma_{j=0}^{d-1} \tilde{a}_i^j$ is isomorphic to $( a_{i,k}^{0} : \ldots : a_{i,k}^{d})$ for all $1 \leq i \leq m$. Note that in
	particular this means that $\tilde{a}_i^j > 0$ for all $1 \leq i \leq m$ and $0 \leq j \leq d-1$. We write
	$$\tilde{N}_{\ell k} = \{ (\tilde{a}_i^0 : \tilde{a}_i^1 : \ldots : \tilde{a}_i^{d-1} : \tilde{a}_i^{d} + \ell k) \,|\, 1 \leq i \leq m \}$$
	which by construction is isomorphic to $N_k$ and we conclude the proof by the observation that in $\tilde{N}_{\ell k}$ 
	only the $d$-th entries depend on $k$ and thus for $k_0$ sufficiently large 
	$\tilde{N}_{\ell k_0}$ satisfies the preconditions of Theorem \ref{order2}. 
	Hence $\tilde{N}_{\ell k_0}$ extends to an infinite dense family containing isomorphic copies of $N_k$ for each $k > k_0$.
\end{proof}
	
\begin{kor}
	Let $N_k$, $k \geq 0$, be an infinite family of cyclic combinatorial $d$-manifolds of order $2$ such that no dense family contains an infinite number of members of $N_k$. Then the number of vertices of $N_0$ has to be even.
\end{kor}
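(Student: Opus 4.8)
The plan is to obtain this corollary directly as the contrapositive of the preceding lemma, specialised to the order $l = 2$. Recall that that lemma asserts: whenever $l$ is a unit in $\mathbb{Z}_n$ (with $n$ the number of vertices of $N_0$), all but finitely many members of the order-$l$ series $N_k$ already lie in one fixed dense series. For $l = 2$ the hypothesis to be checked is simply $2 \in \mathbb{Z}_n^{\times}$, and the elementary observation driving the whole argument is that $2$ is a unit in $\mathbb{Z}_n$ precisely when $\gcd(2,n) = 1$, that is, precisely when $n$ is \emph{odd}.

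Accordingly, I would argue by contraposition. Suppose, for contradiction, that $N_0$ has an odd number of vertices $n$. Then $\gcd(2,n) = 1$, so $2$ is a unit in $\mathbb{Z}_n$, and the hypothesis of the preceding lemma is met with $l = 2$. Applying that lemma therefore yields an index $k_0 \geq 0$ such that every member $N_k$ with $k \geq k_0$ is contained in one fixed dense series $D$ of combinatorial $d$-manifolds.

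The only point requiring care — and the main, if mild, obstacle — is the passage from the lemma's conclusion, phrased as ``all but finitely many members'', to the statement of being contained in a dense series as used in the corollary. Here I would note that discarding the finitely many initial terms $N_0, \ldots, N_{k_0 - 1}$ leaves the infinite cofinal subfamily $\{ N_k \}_{k \geq k_0}$, which is again an order-$2$ series and which is literally a subfamily of the dense series $D$. Since whether an infinite series is contained in a dense series is unaffected by removing or restoring finitely many terms, the full series $N_k$, $k \geq 0$, is then contained in a dense series, contradicting the hypothesis of the corollary. Hence the assumption that $n$ is odd is untenable, and the number of vertices of $N_0$ must be even, as claimed.
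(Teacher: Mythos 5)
Your proof is correct and follows essentially the same route as the paper, whose entire proof reads: ``This follows immediately since $2$ is a unit in $\mathbb{Z}_n$ for all $n \equiv 1(2)$,'' i.e.\ the same contrapositive application of the preceding lemma with $l=2$. Your additional care about passing from ``all but finitely many members'' to containment in a dense series is a reasonable clarification of a point the paper leaves implicit.
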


\begin{proof}
	If no dense family contains an infinite number of members of $N_k$, then in particular for all fixed dense families an infinite number of members of $N_k$ is not contained in this family. The statement now follows from Lemma \ref{lem:contained} since $2$ is a unit in $\mathbb{Z}_n$ for all $n \equiv 1(2)$.
\end{proof}

\begin{remark}
Theorem \ref{order2} gives us an easy-to-check combinatorial criterion for arbitrarily dimensional simplicial complexes to be combinatorial $d$-manifolds. This is of great use in dimension $d \geq 5$: Checking the manifold property of a $d$-dimensional simplicial complex involves the recognition of a $(d-1)$-dimensional sphere. While this is easy for $d \leq 3$ and there are deterministic algorithms for the case $d = 4$ \cite{Rubinstein953SphereRec}, heuristic methods have to be used in dimensions $d \geq 5$.
\end{remark}

\section{Classification of cyclic $3$-manifolds}
\label{sec:class}

There is a number of classifications containing cyclic combinatorial manifolds described in the literature due to K\"uhnel and Lassmann \cite{Kuehnel85NeighbComb3MfldsDihedralAutGroup} and Lutz in \cite{Lutz03TrigMnfFewVertVertTrans,Lutz09EquivdCovTrigsSurf,Lutz11TrigMnflds}. These classifications contain $336$ cyclic combinatorial $3$-manifolds of $13$ distinct topological types (cf. Tables \ref{fig:numbers} and \ref{fig:top} for the number of cyclic combinatorial $3$-manifolds and the number of distinct topological types up to $17$ vertices). 

The main motivation to extend this classification was to substantially extend the set of example triangulations where the results from Section \ref{sec:inf} can be applied in order to get new results. Moreover, a larger set of examples gives us a better chance to understand which topological properties are compatible with a cyclic symmetry and which ones are not.

\begin{satz}[Classification of cyclic combinatorial $3$-manifolds]
	\label{classification}
	There are $6070$ combinatorial types of (connected) combinatorial $3$-manifolds with transitive cyclic symmetry with up to $22$ vertices. These complexes split up into $67$ topological types.
\end{satz}

The number of combinatorial manifolds, combinatorial types, {\em locally minimal} combinatorial manifolds, i.e. complexes which cannot be reduced by bistellar moves without inserting new vertices, and topological types can be found in Table \ref{fig:numbers}. A list of all topological types of $3$-manifolds in the classification together with a particular combinatorial manifold of each type sorted by their model geometries is shown in Table \ref{fig:TopTypesSpherical} to \ref{fig:TopTypesOther}. An overview over all topological types of cyclic combinatorial 3-manifolds sorted by vertex number is listed in Table \ref{fig:top}. 

The calculations were done with the help of the \texttt{GAP}-package \textsf{simpcomp} \cite{simpcomp,simpcompISSAC,simpcompISSAC11} as well as \texttt{GAP} \cite{GAP4}. In addition, the $3$-manifold software packages \texttt{regina} by Burton et al. \cite{Burton09Regina}, \texttt{SnapPy} by Weeks \cite{WeeksSnapPea} and the \texttt{Three-manifold Recognizer} developed by the research group of Matveev \cite{Matveev13Recognizer} were used for topological type recognition. All cyclic manifolds are available within \textsf{simpcomp} by calling the function \texttt{SCCyclic3Mfld(n,k)} where \texttt{n} is the number of vertices and \texttt{k} is the number of a specific cyclic combinatorial $3$-manifold. The total number of cyclic \texttt{n}-vertex combinatorial $3$-manifolds can be obtained using the function \texttt{SCNrCyclic3Mfld(n)}.

\subsubsection*{Discussion of the proof}

	The proof of Theorem \ref{classification} was mainly done by computer. In the following, we will discuss the methods and software involved in the proof. 

	\medskip
	The complexes were found using the classification algorithm for transitive combinatorial manifolds due to K\"uhnel and Lassmann \cite{Kuehnel85NeighbComb3MfldsDihedralAutGroup} which is integrated into the software package \textsf{simpcomp} as of Version 1.3. The topological distinctions of most of the spherical and flat manifolds, as well as the connected sums of $S^2 \times S^1$ and $S^2 \dtimes S^1$ were done via analysis of the  fundamental group of the complexes:
	
	\begin{itemize}
		\item The manifolds of type $(S^2 \times S^1)^{\# k}$ and $(S^2 \dtimes S^1)^{\# k}$ were identified by calculating the fundamental group -- the free group on $k$ generators -- and applying Kneser's conjecture, proved by Stallings in 1959 (see \cite{Stallings59KnesersConj}) together with \cite[Theorem 5.2]{Hempel1976}.
		\item By the elliptization conjecture (stated by Thurston in \cite[Chapter 3]{Thurston02GeomTopOf3Mflds}, recently proved by Perelman, see \cite{Perelman03PC1,Perelman03PC2,Perelman03PC3}), the topological type of a spherical $3$-manifold distinct from a lens space is already determined by the isomorphism type of its (finite) fundamental group. This allows an identification of all such $3$-manifolds using the finite group recognition algorithm of \texttt{GAP}.
		\item The fundamental group distinguishes all flat $3$-manifolds by a theorem of Bieberbach (see \cite{Bieberbach12Bewegungsgruppen} and \cite[page 4]{Milnor03TowPoincareClass3Mnf}). On the other hand, all other $3$-manifolds with a fundamental group containing $\mathbb{Z}^3$ are known to be the connected sum of a flat $3$-manifold with some other $3$-manifold (cf. \cite{Luft843MfldsWithSubgroupsContainingZ3}). Hence, all $3$-manifolds with the fundamental group of a flat manifold have to be prime (as all flat manifolds are prime and the fundamental group of a $3$-manifold $M$ determines the length of a prime decomposition of $M$, cf. \cite{Stallings59KnesersConj} and \cite[Theorem 5.2]{Hempel1976}) and thus are flat. Altogether, the topological type of a $3$-manifold with the fundamental group of a flat manifold is in fact flat and the manifold is determined by its fundamental group. Hence, it can be identified by determining its fundamental group using \textsf{simpcomp} and \texttt{GAP}.
	\end{itemize}
	
	For more information about the spherical case in the classification of $3$-manifolds see \cite{Threlfall33SphaerischeMgftkten,Orlik72SeifertMflds}, for more about flat $3$-manifolds see \cite{Bieberbach12Bewegungsgruppen,Milnor03TowPoincareClass3Mnf,Kuehnel02DiffGeom}.
  
	\bigskip
	In the following we will show that only lens spaces of type $\operatorname{L}(3,1)$, $\operatorname{L}(5,1)$, $\operatorname{L}(7,1)$, $\operatorname{L}(8,3)$ and $\operatorname{L}(15,4)$ can be represented as a transitive cyclic combinatorial manifold with at most $22$ vertices.

	\begin{figure}[p]
		\begin{center}
			\includegraphics[width=0.8\textwidth]{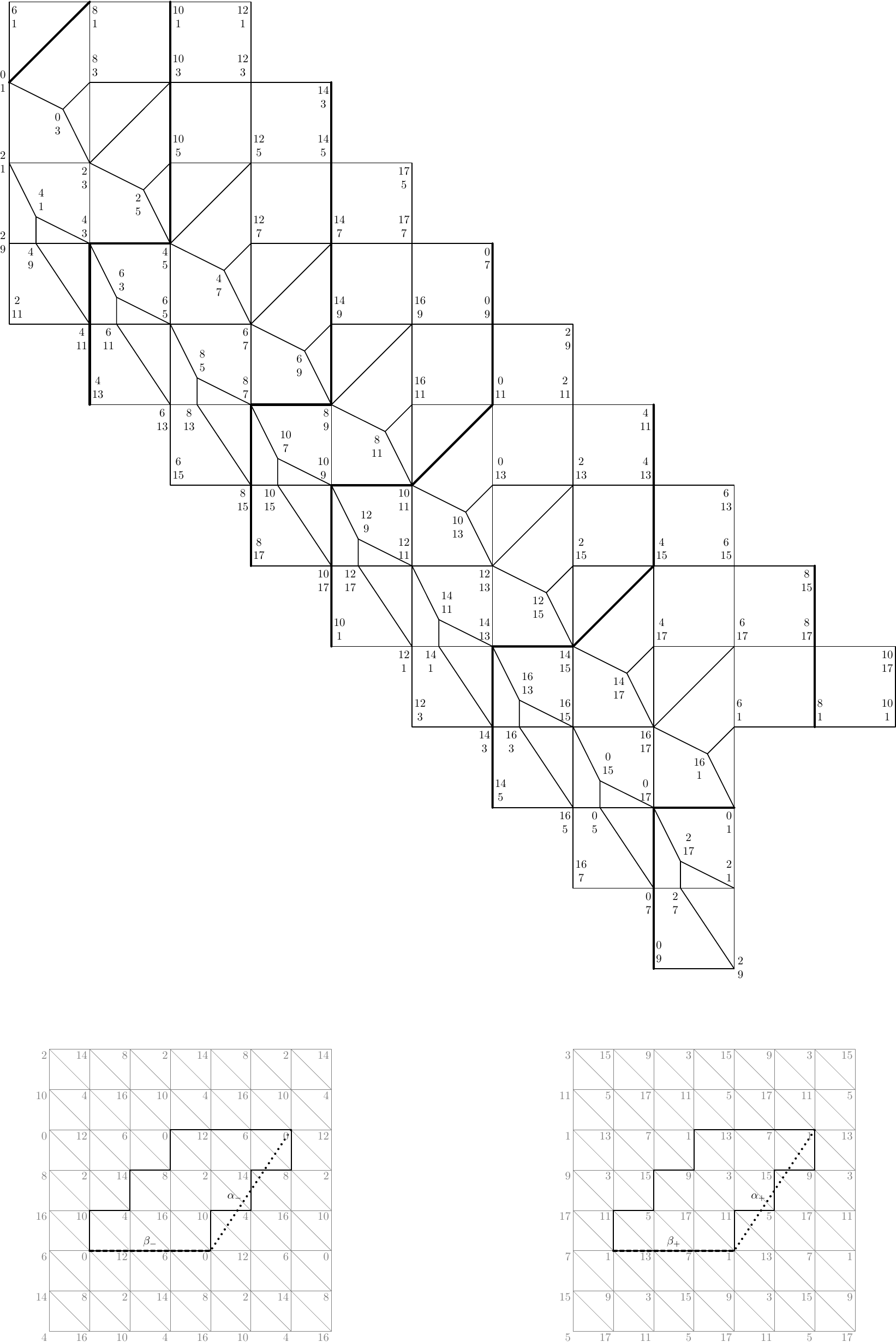}
			\caption{Slicing of $C$ between the odd labeled and the even labeled vertices together with the boundary of the two solid tori spanned by the even and by the odd vertices.\label{fig:51}}
		\end{center}
	\end{figure}

	\medskip
	First, let us show that the cyclic combinatorial manifold $C$ with $18$ vertices given by 
		\scriptsize
		$$C:=\{(1:1:1:15),(1:2:5:10),(1:5:2:10),(1:5:10:2),(2:5:2:9),(2:6:4:6),(2:7:2:7),(4:4:4:6)\}$$
		\normalsize
	is homeomorphic to the lens space $L(5,1)$.
	
	Figure \ref{fig:51} shows the slicing of $C$ between the odd labeled vertices and the even labeled vertices. Here, the slicing is a torus. Also, both the span of the odd and the span of the even labeled vertices is a solid torus and hence $C$ is a manifold of Heegaard genus at most $1$. For the $1$-homology of the two tori $T_- := \partial (\operatorname{span} (0,2, \ldots , 16))$ and $T_+ := \partial (\operatorname{span} (1,3, \ldots , 17))$ we choose a basis as follows (here and in the following $[v_0, v_1, \ldots , v_r ]$ denotes a path of edges in a simplicial complex starting at vertex $v_0$ and ending at vertex $v_r$):
	\begin{eqnarray}
		\alpha_- &:=& [ 0, 10, 4, 14, 8, 0 ] \nonumber \\
		\beta_- &:=& [ 0, 12, 6, 0 ] \nonumber
	\end{eqnarray}
	and
	\begin{eqnarray}
		\alpha_+ &:=& [ 1, 11, 5, 15, 9, 1 ] \nonumber \\
		\beta_+ &:=& [ 1, 13, 7, 1 ] \nonumber
	\end{eqnarray}
	such that $H_1 (T_{\pm}) = \langle \alpha_{\pm} , \beta_{\pm} \rangle$, $H_1 (\operatorname{span} (0,2, \ldots , 16)) = \langle \beta_{-} \rangle$ and $H_1 (\operatorname{span} (1,3, \ldots , 17)) = \langle \beta_{+} \rangle$.
	
	Now, we want to express $\alpha_-$ in terms of $\alpha_+$ and $\beta_+$. With the help of the slicing (the thick line in Figure \ref{fig:51} denotes a path homologous to $\alpha_-$ in the slicing) we see that $\alpha_-$ can be transported to the path
	$$ [ 17,15,7,5,3,13,11,3,1,17,9,7,17 ] $$
	which entirely lies in $T_+$. This path is homologous to $-5$ times $\beta_+$ plus $4$ times $\alpha_+$ and hence the topological type of $C$ must be $L(-5,4) \cong L(5,1)$.

	\begin{figure}[p]
		\begin{center}
			\includegraphics[width=1.0\textwidth]{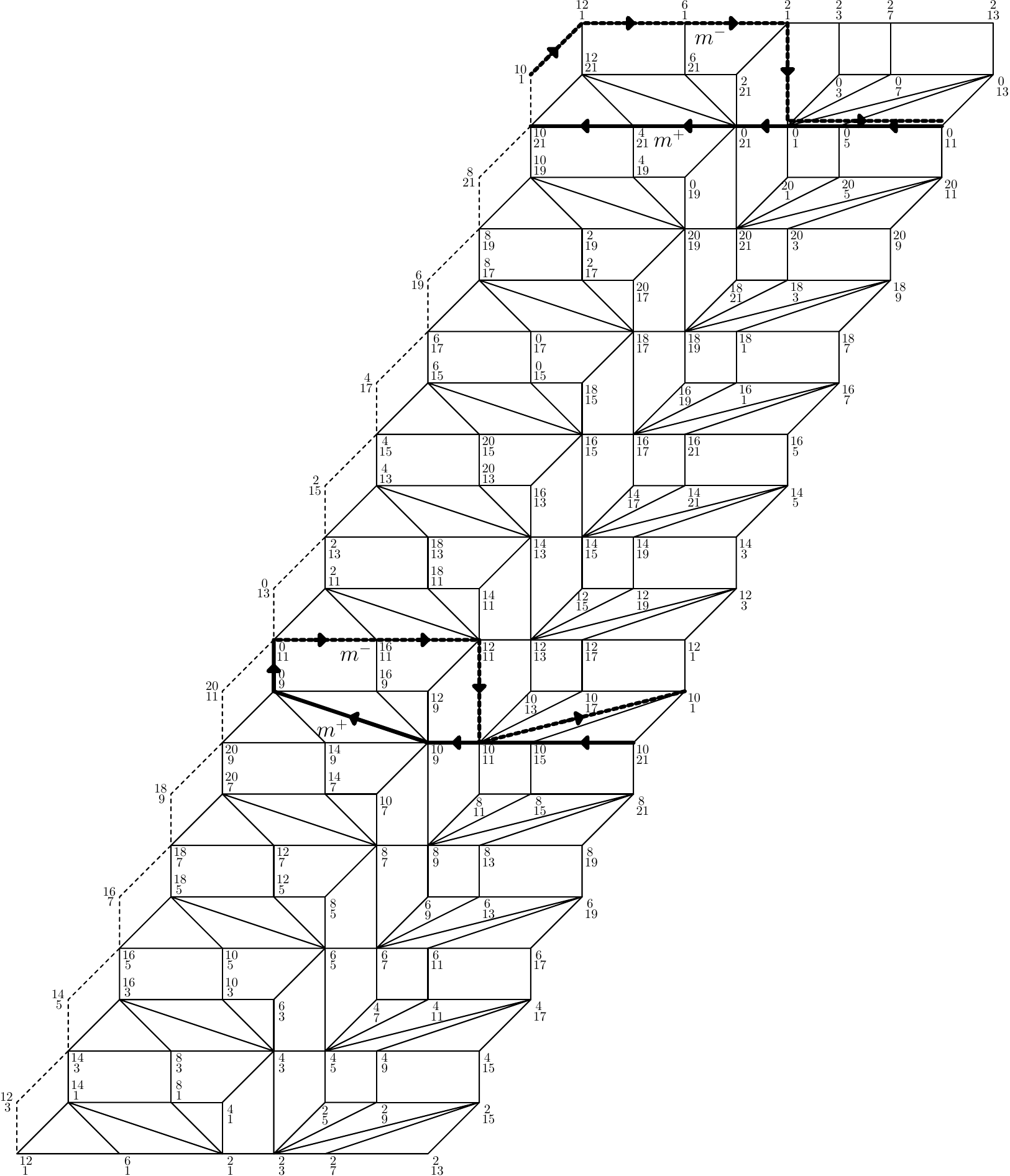}
			\caption{Slicing of $\operatorname{HS}$ between the odd and the even labeled vertices 
				splitting the complex into two isomorphic knot exteriors of the trefoil knot.\label{fig:homSphere}}
		\end{center}
	\end{figure}

	\medskip
	The proof that the cyclic combinatorial manifold $D$ with $22$ vertices defined by
		\scriptsize
		\begin{eqnarray}
			D&:=\{ &(1:1:1:19),(1:2:5:14),(1:7:12:2),(2:5:2:13),(2:7:2:11), \nonumber \\
			&&(2:8:4:8),(2:9:2:9),(2:12:3:5),(4:6:4:8),(4:6:6:6)\} \nonumber 
		\end{eqnarray}
		\normalsize
	is homeomorphic to the lens space $L(7,1)$ is completely analogous to the above.
	
%
	
	\medskip
	For the identification of the exact topological type of the lens spaces

		\scriptsize
		\begin{eqnarray}
			L_0 &:= \{&(1:1:1:11),(1:2:4:7),(1:4:2:7),(1:4:7:2),(2:4:4:4),(2:5:2:5)\} \nonumber \\
			L_1 &:= \{&(1:1:1:15),(1:2:4:11),(1:4:2:11),(1:4:11:2), \nonumber \\
				&&(2:4:8:4),(2:5:2:9),(2:7:2:7),(4:4:4:6)\} \nonumber \\
			L_2 &:= \{& (1:1:1:19),(1:2:4:15),(1:4:2:15),(1:4:15:2),(2:4:12:4), \nonumber \\
				&&(2:5:2:13),(2:7:2:11),(2:9:2:9),(4:4:4:10),(4:6:4:8)\} \nonumber
		\end{eqnarray}
		\normalsize

	\noindent
	with $14$, $18$ and $22$ vertices see Theorem \ref{thm:lensSeries}. Using the $3$-manifold software \texttt{regina} \cite{Burton09Regina}, we checked that all other lens spaces in the classification are of one of the above types.

	\medskip
	Finally, there are $74$ homeomorphic cyclic triangulations of a homology $3$-sphere with the lexicographically minimal complex being
	\scriptsize
	\begin{eqnarray}
		\operatorname{HS}&:=\{ &(1:1:1:19),(1:2:4:15),(1:4:8:9),(1:4:15:2),(1:6:6:9),  \nonumber \\
		&& (2:4:10:6),(2:5:6:9),(2:9:2:9),(2:9:5:6),(4:4:4:10)\} \nonumber
	\end{eqnarray}

	\normalsize
	(cf. Table \ref{fig:TopTypesOther}). In order to describe its topological type observe that the slicing between the even and the odd labeled vertices of $\operatorname{HS}$ 
	(cf. Figure \ref{fig:homSphere}) is a torus decomposing $\operatorname{HS}$ into two complexes $\operatorname{HS}^{+}$ (containing the even labeled vertices)
	and $\operatorname{HS}^{-}$ (containing the odd labeled vertices) which both are homeomorphic 
	to the knot exterior of the trefoil knot. The recognition step of this result is due to the $3$-manifold software \texttt{SnapPy} by Weeks \cite{WeeksSnapPea}
	which is well suited to deal with knot complements. 

\vspace{-.5cm}
\small
\begin{center}
	\begin{longtable}{|l|c|c|c|c|c|}
		\caption{The classification of cyclic combinatorial $3$-manifolds with up to $22$ vertices\label{fig:numbers}} \\
		
		\hline
		$n$ & \# complexes & \# cd$^*$ compl. & \# lm$^*$ compl. & \# cd lm$^*$ compl. & \# top. types \\ \hline 
		\endfirsthead

		\multicolumn{6}{l}%
		{ \tablename\ \thetable{} -- continued from previous page} \\
		\hline 
		$n$ & \# complexes & \# cd$^*$ compl. & \# lm$^*$ compl. & \# cd lm$^*$ compl. & \# top. types \\ \hline 
		\endhead

		\hline \multicolumn{6}{r}{{continued on next page --}} \\
		\endfoot

		\hline \hline
		\endlastfoot
		$5$&$1$&$1$&$1$&$1$&$1$ \\ \hline
		$6$&$1$&$1$&$0$&$0$&$1$ \\ \hline
		$7$&$3$&$1$&$0$&$0$&$1$ \\ \hline
		$8$&$3$&$2$&$0$&$0$&$1$ \\ \hline
		$9$&$6$&$2$&$3$&$1$&$2$ \\ \hline
		$10$&$19$&$8$&$0$&$0$&$3$ \\ \hline
		$11$&$40$&$6$&$0$&$0$&$2$ \\ \hline
		$12$&$56$&$20$&$0$&$0$&$4$ \\ \hline
		$13$&$135$&$15$&$0$&$0$&$2$ \\ \hline
		$14$&$258$&$50$&$0$&$0$&$4$ \\ \hline
		$15$&$217$&$34$&$1$&$1$&$5$ \\ \hline
		$16$&$742$&$107$&$12$&$2$&$8$ \\ \hline
		$17$&$1272$&$89$&$24$&$2$&$7$ \\ \hline
		$18$&$1818$&$319$&$24$&$4$&$15$ \\ \hline
		$19$&$4797$&$279$&$63$&$4$&$6$ \\ \hline
		$20$&$7670$&$1008$&$66$&$9$&$20$ \\ \hline
		$21$&$11931$&$1038$&$198$&$18$&$22$ \\ \hline
		$22$&$30550$&$3090$&$230$&$23$&$40$ \\ \hline
	\end{longtable}
\end{center}

\vspace{-.8cm}
$^*$ cd = combinatorially distinct, lm = locally minimal

\normalsize
\begin{landscape}
\tiny
\begin{center}
	\begin{longtable}{|r|c|c|c|l|l|}
		\caption{Cyclic combinatorial $3$-manifolds of spherical type. \label{fig:TopTypesSpherical}} \\
		
		\hline
		$n$ & top. type & $ \pi_1 $ & $\operatorname{TV}(7,1)^{***}$ & difference cycles of smallest complex* & source \\ \hline 
		\endfirsthead

		\multicolumn{6}{l}%
		{ \tablename\ \thetable{} -- continued from previous page} \\
		\hline 
		$n$ & top. type & $ \pi_1 $ & $\operatorname{TV}(7,1)^{***}$ & difference cycles of smallest complex* & source \\ \hline 
		\endhead

		\hline \multicolumn{6}{r}{{continued on next page --}} \\
		\endfoot
	
		\hline \hline
		\endlastfoot
$5$&$S^3$&$1$&$0.053787171163$&$\{(1\!:\!1\!:\!1\!:\!2)\}$&$\partial \Delta^4$ \\ \hline
$14$&$L(3,1)$&$\mathbb{Z}_3$&$0.174645847708$&$\{(1\!:\!1\!:\!1\!:\!11),(1\!:\!2\!:\!4\!:\!7),(1\!:\!4\!:\!2\!:\!7),(1\!:\!4\!:\!7\!:\!2),(2\!:\!4\!:\!4\!:\!4),(2\!:\!5\!:\!2\!:\!5)\}$&\cite[Complex $3_{14}$]{Kuehnel85NeighbComb3MfldsDihedralAutGroup}, Thm. \ref{thm:lensSeries} \\ \hline
$15$&$\mathbb{R}P^3$&$\mathbb{Z}_2$&&$\{(1\!:\!1\!:\!1\!:\!12),(1\!:\!2\!:\!3\!:\!9),(1\!:\!5\!:\!7\!:\!2),(2\!:\!3\!:\!3\!:\!7),(3\!:\!4\!:\!3\!:\!5),(3\!:\!4\!:\!4\!:\!4)\}$&\cite[Complex $2_{15}$]{Kuehnel85NeighbComb3MfldsDihedralAutGroup} \\ \hline
$15$&$P_2 = S^3 / Q_8$&$Q_8$&&$\{(1\!:\!1\!:\!1\!:\!12),(1\!:\!2\!:\!4\!:\!8),(1\!:\!6\!:\!6\!:\!2),(2\!:\!4\!:\!3\!:\!6),(3\!:\!4\!:\!4\!:\!4)\}$&\cite[Complex $8_{15}$]{Kuehnel85NeighbComb3MfldsDihedralAutGroup} \\ \hline
$16$&$S^3 / \operatorname{SL} (2,3)$&$\operatorname{SL}(2,3)$&&$\{(1\!:\!1\!:\!3\!:\!11),(1\!:\!1\!:\!4\!:\!10),(1\!:\!3\!:\!2\!:\!10),(2\!:\!3\!:\!8\!:\!3),(2\!:\!4\!:\!6\!:\!4),(3\!:\!5\!:\!3\!:\!5)\}$&\cite[Complex $^3 16^{1}_{31}$]{Lutz11TrigMnflds} \\ \hline
$17$&$\Sigma^3$&$\operatorname{SL}(2,5)$&$0.717779809966$&$\{(1\!:\!1\!:\!1\!:\!14),(1\!:\!2\!:\!4\!:\!10),(1\!:\!6\!:\!8\!:\!2),(2\!:\!3\!:\!4\!:\!8),(2\!:\!3\!:\!6\!:\!6),(2\!:\!4\!:\!5\!:\!6),(4\!:\!4\!:\!4\!:\!5)\}$&\cite[Complex $^3 17^{1}_{21}$]{Lutz11TrigMnflds} \\ \hline
$18$&$L(8,3)$&$\mathbb{Z}_8$&&$\{(1\!:\!1\!:\!1\!:\!15),(1\!:\!2\!:\!4\!:\!11),(1\!:\!4\!:\!2\!:\!11),(1\!:\!4\!:\!11\!:\!2),(2\!:\!4\!:\!8\!:\!4),(2\!:\!5\!:\!2\!:\!9),(2\!:\!7\!:\!2\!:\!7),(4\!:\!4\!:\!4\!:\!6)\}$&Thm. \ref{thm:lensSeries} \\ \hline
$18$&$L(5,1)$&$\mathbb{Z}_5$&&$\{(1\!:\!1\!:\!1\!:\!15),(1\!:\!2\!:\!5\!:\!10),(1\!:\!5\!:\!2\!:\!10),(1\!:\!5\!:\!10\!:\!2),(2\!:\!5\!:\!2\!:\!9),(2\!:\!6\!:\!4\!:\!6),(2\!:\!7\!:\!2\!:\!7),(4\!:\!4\!:\!4\!:\!6)\}$& \\ \hline
$20$&$P_7 = S^3/Q_{28}$&$\mathbb{Z}_7 \ltimes \mathbb{Z}_4$&&$\{(1\!:\!1\!:\!1\!:\!17),(1\!:\!2\!:\!15\!:\!2),(2\!:\!3\!:\!12\!:\!3),(3\!:\!4\!:\!6\!:\!7),(3\!:\!4\!:\!7\!:\!6),(3\!:\!5\!:\!3\!:\!9),(3\!:\!6\!:\!3\!:\!8),(3\!:\!6\!:\!4\!:\!7),(4\!:\!6\!:\!4\!:\!6)\}$& \\ \hline
$22$&$P_8 = S^3/Q_{32}$&$Q_{32}$&&$\{(1\!:\!1\!:\!1\!:\!19),(1\!:\!2\!:\!1\!:\!18),(1\!:\!3\!:\!15\!:\!3),(3\!:\!4\!:\!3\!:\!12),(3\!:\!5\!:\!6\!:\!8),(3\!:\!5\!:\!8\!:\!6),(3\!:\!6\!:\!3\!:\!10),(3\!:\!6\!:\!5\!:\!8),(3\!:\!7\!:\!3\!:\!9),(5\!:\!6\!:\!5\!:\!6)\}$& \\ \hline
$22$&$P_4 = S^3/Q_{16}$&$Q_{16}$&&$\{(1\!:\!1\!:\!1\!:\!19),(1\!:\!2\!:\!3\!:\!16),(1\!:\!5\!:\!7\!:\!9),(1\!:\!12\!:\!7\!:\!2),(2\!:\!3\!:\!3\!:\!14),(2\!:\!6\!:\!7\!:\!7),(3\!:\!4\!:\!8\!:\!7),(3\!:\!4\!:\!10\!:\!5),(4\!:\!4\!:\!4\!:\!10)\}$& \\ \hline
$22$&$L(15,4)$&$\mathbb{Z}_{15}$&&$\{(1\!:\!1\!:\!1\!:\!19),(1\!:\!2\!:\!4\!:\!15),(1\!:\!4\!:\!2\!:\!15),(1\!:\!4\!:\!15\!:\!2),(2\!:\!4\!:\!12\!:\!4),(2\!:\!5\!:\!2\!:\!13),(2\!:\!7\!:\!2\!:\!11),(2\!:\!9\!:\!2\!:\!9),(4\!:\!4\!:\!4\!:\!10),(4\!:\!6\!:\!4\!:\!8)\}$&Thm. \ref{thm:lensSeries} \\ \hline
$22$&$L(7,1)$&$\mathbb{Z}_7$&&$\{(1\!:\!1\!:\!1\!:\!19),(1\!:\!2\!:\!5\!:\!14),(1\!:\!7\!:\!12\!:\!2),(2\!:\!5\!:\!2\!:\!13),(2\!:\!7\!:\!2\!:\!11),(2\!:\!8\!:\!4\!:\!8),(2\!:\!9\!:\!2\!:\!9),(2\!:\!12\!:\!3\!:\!5),(4\!:\!6\!:\!4\!:\!8),(4\!:\!6\!:\!6\!:\!6)\}$& \\ \hline
	\end{longtable}
\end{center}

\begin{center}
	\begin{longtable}{|r|c|c|c|l|l|}
		\caption{Cyclic combinatorial $3$-manifolds of type $S^2 \times \mathbb{R}$. \label{fig:TopTypesS2xR}} \\
		
		\hline
		$n$ & top. type & $ H_{*} $ & $\operatorname{TV}(7,1)^{***}$ & difference cycles of smallest complex* & source \\ \hline 
		\endfirsthead

		\multicolumn{5}{l}%
		{ \tablename\ \thetable{} -- continued from previous page} \\
		\hline 
		$n$ & top. type & $ H_{*} $ & $\operatorname{TV}(7,1)^{***}$ & difference cycles of smallest complex* & source \\ \hline 
		\endhead

		\hline \multicolumn{5}{r}{{continued on next page --}} \\
		\endfoot
		
		\hline \hline
		\endlastfoot
$9$&$S^2 \dtimes S^1$&$(\mathbb{Z},\mathbb{Z} ,\mathbb{Z}_{2},0)$&&$\{(1\!:\!1\!:\!2\!:\!5),(1\!:\!1\!:\!5\!:\!2),(1\!:\!2\!:\!1\!:\!5)\}$&\cite[Complex $N^9_{51}$]{Altshuler74NeighComb3Mnf9Vert} \\ \hline
$10$&$S^2 \times S^1$&$(\mathbb{Z},\mathbb{Z} ,\mathbb{Z} ,\mathbb{Z})$&$1$&$\{(1\!:\!1\!:\!2\!:\!6),(1\!:\!1\!:\!6\!:\!2),(1\!:\!2\!:\!1\!:\!6)\}$&\cite[Complex $M^3_2(10)$]{Kuehnel96PermDiffCyc} \\ \hline
$17$&$\mathbb{R}P^2 \times S^1$&$(\mathbb{Z},\mathbb{Z} \oplus \mathbb{Z}_{2},\mathbb{Z}_{2},0)$&&$\{(1\!:\!1\!:\!1\!:\!14),(1\!:\!2\!:\!12\!:\!2),(2\!:\!3\!:\!9\!:\!3),(
3\!:\!4\!:\!4\!:\!6),(3\!:\!4\!:\!6\!:\!4),(3\!:\!5\!:\!3\!:\!6),(3\!:\!6\!:\!4\!:\!4)\}$&\cite[Complex $IV_{17}$]{Kuehnel85NeighbComb3MfldsDihedralAutGroup}, \cite[Complex $^3 17^{2}_{13}$]{Lutz11TrigMnflds} \\ \hline
	\end{longtable}
\end{center}

\begin{center}
	\begin{longtable}{|r|c|c|c|l|l|}
		\caption{Cyclic combinatorial $3$-manifolds of flat type. \label{fig:TopTypesFlat}} \\
		
		\hline
		$n$ & top. type & $ H_{*} $ & $\operatorname{TV}(7,1)^{***}$ & difference cycles of smallest complex* & source \\ \hline 
		\endfirsthead

		\multicolumn{5}{l}%
		{ \tablename\ \thetable{} -- continued from previous page} \\
		\hline 
		$n$ & top. type & $ H_{*} $ & $\operatorname{TV}(7,1)^{***}$ & difference cycles of smallest complex* & source \\ \hline 
		\endhead

		\hline \multicolumn{5}{r}{{continued on next page --}} \\
		\endfoot
		
		\hline \hline
		\endlastfoot
$15$&$\mathbb{T}^3$&$(\mathbb{Z},\mathbb{Z}^{3} ,\mathbb{Z}^{3} ,\mathbb{Z})$&&$\{(1\!:\!2\!:\!4\!:\!8),(1\!:\!2\!:\!8\!:\!4),(1\!:\!4\!:\!2\!:\!8),(1\!:\!4\!:\!
8\!:\!2),(1\!:\!8\!:\!2\!:\!4),(1\!:\!8\!:\!4\!:\!2)\}$&\cite[Complex $III_{15}$]{Kuehnel85NeighbComb3MfldsDihedralAutGroup} \\ \hline
$16$&$\mathfrak{B}_2$&$(\mathbb{Z},\mathbb{Z}^{2} ,\mathbb{Z} \oplus \mathbb{Z}_{2},0)$&$6$&$\{(1\!:\!1\!:\!3\!:\!11),(1\!:\!1\!:\!4\!:\!10),(1\!:\!3\!:\!2\!:\!10),(2\!:\!
3\!:\!4\!:\!7),(2\!:\!4\!:\!7\!:\!3),(2\!:\!7\!:\!3\!:\!4)\}$&\cite[p.\ 89]{Lutz03TrigMnfFewVertVertTrans}, \cite[Complex $^3 16^{55}_{10}$]{Lutz11TrigMnflds} \\ \hline
$18$&$\mathbb{K}^2 \times S^1$&$(\mathbb{Z},\mathbb{Z}^{2} \oplus \mathbb{Z}_{2},\mathbb{Z} \oplus \mathbb{Z}_{2},0)$&&$\{(1\!:\!1\!:\!3\!:\!13),(1\!:\!1\!:\!6\!:\!10),(
1\!:\!3\!:\!1\!:\!13),(1\!:\!6\!:\!8\!:\!3),(1\!:\!7\!:\!6\!:\!4),(2\!:\!3\!:\!7\!:\!6),(2\!:\!6\!:\!4\!:\!6)\}$& \\ \hline
$18$&$\mathfrak{B}_4$&$(\mathbb{Z},\mathbb{Z} \oplus \mathbb{Z}_{4},\mathbb{Z}_{2},0)$&&$\{(1\!:\!1\!:\!3\!:\!13),(1\!:\!1\!:\!13\!:\!3),(1\!:\!3\!:\!1\!:\!13),(2\!:\!
3\!:\!6\!:\!7),(2\!:\!6\!:\!2\!:\!8),(2\!:\!6\!:\!7\!:\!3),(2\!:\!7\!:\!2\!:\!7),(2\!:\!7\!:\!3\!:\!6)\}$& \\ \hline
$20$&$\mathfrak{G}_2$&$(\mathbb{Z},\mathbb{Z} \oplus \mathbb{Z}_{2} \oplus \mathbb{Z}_{2},\mathbb{Z} ,\mathbb{Z})$&&$\{(1\!:\!1\!:\!3\!:\!15),(1\!:\!1\!:\!6\!:\!12),(
1\!:\!3\!:\!1\!:\!15),(1\!:\!6\!:\!10\!:\!3),(1\!:\!7\!:\!2\!:\!10),(1\!:\!9\!:\!6\!:\!4),(2\!:\!3\!:\!7\!:\!8),(2\!:\!6\!:\!5\!:\!7),(4\!:\!6\!:\!4\!:\!
6)\}$& \\ \hline
$21$&$\mathfrak{G}_3$&$(\mathbb{Z},\mathbb{Z} \oplus \mathbb{Z}_{3},\mathbb{Z} ,\mathbb{Z})$&&$\{(1\!:\!1\!:\!1\!:\!18),(1\!:\!2\!:\!1\!:\!17),(1\!:\!3\!:\!5\!:\!12),(
1\!:\!5\!:\!3\!:\!12),(1\!:\!5\!:\!6\!:\!9),(1\!:\!11\!:\!6\!:\!3),(3\!:\!5\!:\!8\!:\!5),(4\!:\!5\!:\!6\!:\!6)\}$& \\ \hline
	\end{longtable}
\end{center}

\begin{center}
	\begin{longtable}{|r|c|c|l|}
		\caption{Cyclic combinatorial $3$-manifolds of $Nil$ type. \label{fig:TopTypesNil}} \\
		
		\hline
		$n$ & top. type & $ H_{*} $ & difference cycles of smallest complex* \\ \hline 
		\endfirsthead

		\multicolumn{4}{l}%
		{ \tablename\ \thetable{} -- continued from previous page} \\
		\hline 
		$n$ & top. type & $ H_{*} $ & difference cycles of smallest complex* \\ \hline 
		\endhead

		\hline \multicolumn{4}{r}{{continued on next page --}} \\
		\endfoot
		
		\hline \hline
		\endlastfoot
$18$&$\operatorname{SFS} [\mathbb{T}^2: (1,1)]$&$(\mathbb{Z},\mathbb{Z}^{2} ,\mathbb{Z}^{2} ,\mathbb{Z})$&$\{(1\!:\!2\!:\!2\!:\!13),(1\!:\!2\!:\!12\!:\!3),(1\!:\!4\!:\!2\!:\!11),(
1\!:\!6\!:\!1\!:\!10),(1\!:\!7\!:\!1\!:\!9),(1\!:\!8\!:\!1\!:\!8),(1\!:\!11\!:\!3\!:\!3),(2\!:\!2\!:\!11\!:\!3)\}$ \\ \hline
$20$&$\operatorname{SFS} [\mathbb{K}^2/n2: (1,5)]$&$(\mathbb{Z},\mathbb{Z} \oplus \mathbb{Z}_{4},\mathbb{Z} ,\mathbb{Z})$&$\{(1\!:\!1\!:\!1\!:\!17),(1\!:\!2\!:\!4\!:\!13),(1\!:\!
6\!:\!8\!:\!5),(1\!:\!8\!:\!6\!:\!5),(1\!:\!8\!:\!9\!:\!2),(2\!:\!4\!:\!5\!:\!9),(3\!:\!4\!:\!4\!:\!9),(4\!:\!4\!:\!5\!:\!7)\}$ \\ \hline
$21$&$\operatorname{SFS} [S^2: (3,2) (3,2) (3,-1)]$&$(\mathbb{Z},\mathbb{Z}_{3} \oplus \mathbb{Z}_{9},0,\mathbb{Z})$&$\{(1\!:\!1\!:\!1\!:\!18),(1\!:\!2\!:\!1\!:\!17),(
1\!:\!3\!:\!6\!:\!11),(1\!:\!6\!:\!3\!:\!11),(1\!:\!6\!:\!11\!:\!3),(3\!:\!5\!:\!3\!:\!10),(3\!:\!5\!:\!8\!:\!5),(3\!:\!6\!:\!6\!:\!6),(3\!:\!7\!:\!3\!:\!
8)\}$ \\ \hline
$21$&$\operatorname{SFS} [\mathbb{T}^2: (1,7)]$&$(\mathbb{Z},\mathbb{Z}^{2} \oplus \mathbb{Z}_{7},\mathbb{Z}^{2} ,\mathbb{Z})$&$\{(1\!:\!2\!:\!4\!:\!14),(1\!:\!2\!:\!5\!:\!13),(
1\!:\!6\!:\!5\!:\!9),(1\!:\!7\!:\!9\!:\!4),(1\!:\!11\!:\!3\!:\!6),(1\!:\!14\!:\!2\!:\!4),(3\!:\!4\!:\!3\!:\!11),(3\!:\!5\!:\!9\!:\!4),(3\!:\!6\!:\!5\!:\!
7)\}$ \\ \hline
	\end{longtable}
\end{center}

\begin{center}
	\begin{longtable}{|r|c|c|c|l|}
		\caption{Cyclic combinatorial $3$-manifolds of type $SL(2,\mathbb{R})$. \label{fig:TopTypesSL2R}} \\
		
		\hline
		$n$ & top. type & $ H_{*} $ & $\operatorname{TV}(7,1)^{***}$ & difference cycles of smallest complex* \\ \hline 
		\endfirsthead

		\multicolumn{4}{l}%
		{ \tablename\ \thetable{} -- continued from previous page} \\
		\hline 
		$n$ & top. type & $ H_{*} $ & $\operatorname{TV}(7,1)^{***}$ & difference cycles of smallest complex* \\ \hline 
		\endhead

		\hline \multicolumn{4}{r}{{continued on next page --}} \\
		\endfoot
		
		\hline \hline
		\endlastfoot
$19$&$\Sigma(2,3,7)$&$(\mathbb{Z},0,0,\mathbb{Z})$&$0.881772448769$&$\{(1\!:\!1\!:\!1\!:\!16),(1\!:\!2\!:\!6\!:\!10),(1\!:\!8\!:\!8\!:\!2),(2\!:\!6\!:\!3\!:\!8),(3\!:\!
6\!:\!4\!:\!6),(4\!:\!5\!:\!4\!:\!6),(4\!:\!5\!:\!5\!:\!5)\}$ \\ \hline
$20$&$\operatorname{SFS} [S^2: (3,1) (3,1) (4,-3)]$&$(\mathbb{Z},\mathbb{Z}_{3},0,\mathbb{Z})$&$0.838638486511$&$\{(1\!:\!1\!:\!5\!:\!13),(1\!:\!1\!:\!6\!:\!12),(1\!:\!5\!:\!2\!:\!12),(
2\!:\!5\!:\!2\!:\!11),(2\!:\!6\!:\!6\!:\!6),(2\!:\!7\!:\!4\!:\!7),(3\!:\!4\!:\!3\!:\!10),(3\!:\!4\!:\!9\!:\!4),(3\!:\!7\!:\!3\!:\!7)\}$ \\ \hline
$21$&$\operatorname{SFS} [S^2: (2,1) (2,1) (2,1) (3,-5)]$&$(\mathbb{Z},\mathbb{Z}_{2} \oplus \mathbb{Z}_{2},0,\mathbb{Z})$&&$\{(1\!:\!1\!:\!1\!:\!18),(1\!:\!2\!:\!7\!:\!
11),(1\!:\!9\!:\!9\!:\!2),(2\!:\!7\!:\!3\!:\!9),(3\!:\!7\!:\!4\!:\!7),(4\!:\!6\!:\!4\!:\!7),(4\!:\!6\!:\!5\!:\!6),(5\!:\!5\!:\!5\!:\!6)\}$ \\ \hline
$21$&$\operatorname{SFS} [S^2: (5,1) (5,1) (5,-4)]$&$(\mathbb{Z},\mathbb{Z}_{5} \oplus \mathbb{Z}_{10},0,\mathbb{Z})$&&$\{(1\!:\!1\!:\!3\!:\!16),(1\!:\!1\!:\!4\!:\!15),(
1\!:\!3\!:\!10\!:\!7),(1\!:\!5\!:\!8\!:\!7),(2\!:\!3\!:\!6\!:\!10),(2\!:\!4\!:\!10\!:\!5),(2\!:\!6\!:\!3\!:\!10),(2\!:\!6\!:\!8\!:\!5),(3\!:\!6\!:\!3\!:\!
9)\}$ \\ \hline
$21$&$\operatorname{SFS} [S^2: (4,1) (4,1) (4,-3)]$&$(\mathbb{Z},\mathbb{Z}_{4} \oplus \mathbb{Z}_{4},0,\mathbb{Z})$&&$\{(1\!:\!1\!:\!3\!:\!16),(1\!:\!1\!:\!4\!:\!15),(
1\!:\!3\!:\!10\!:\!7),(1\!:\!5\!:\!8\!:\!7),(2\!:\!3\!:\!6\!:\!10),(2\!:\!4\!:\!10\!:\!5),(2\!:\!6\!:\!3\!:\!10),(2\!:\!6\!:\!8\!:\!5),(3\!:\!6\!:\!6\!:\!
6)\}$ \\ \hline
$22$&$\operatorname{SFS} [S^2: (4,1) (5,2) (5,-3)]$&$(\mathbb{Z},\mathbb{Z}_{5},0,\mathbb{Z})$&&$\{(1\!:\!1\!:\!1\!:\!19),(1\!:\!2\!:\!17\!:\!2),(2\!:\!3\!:\!4\!:\!13),(
2\!:\!7\!:\!10\!:\!3),(3\!:\!4\!:\!8\!:\!7),(3\!:\!5\!:\!4\!:\!10),(3\!:\!5\!:\!7\!:\!7),(4\!:\!6\!:\!4\!:\!8),(4\!:\!6\!:\!6\!:\!6)\}$ \\ \hline
$22$&$\operatorname{SFS} [S^2: (3,1) (3,1) (9,-7)]$&$(\mathbb{Z},\mathbb{Z}_{3} \oplus \mathbb{Z}_{3},0,\mathbb{Z})$&$0.107574342326$&$\{(1\!:\!1\!:\!1\!:\!19),(1\!:\!2\!:\!17\!:\!2),(
2\!:\!3\!:\!14\!:\!3),(3\!:\!5\!:\!9\!:\!5),(4\!:\!5\!:\!6\!:\!7),(4\!:\!5\!:\!8\!:\!5),(4\!:\!6\!:\!6\!:\!6),(4\!:\!6\!:\!7\!:\!5),(4\!:\!7\!:\!4\!:\!7),(
4\!:\!7\!:\!5\!:\!6)\}$ \\ \hline
$22$&$\operatorname{SFS} [S^2: (2,1) (2,1) (3,1) (3,-2)]$&$(\mathbb{Z},\mathbb{Z}_{24},0,\mathbb{Z})$&&$\{(1\!:\!1\!:\!3\!:\!17),(1\!:\!1\!:\!4\!:\!16),(1\!:\!3\!:\!2\!:\!
16),(2\!:\!3\!:\!7\!:\!10),(2\!:\!4\!:\!2\!:\!14),(2\!:\!6\!:\!8\!:\!6),(2\!:\!7\!:\!2\!:\!11),(2\!:\!7\!:\!10\!:\!3),(2\!:\!9\!:\!2\!:\!9),(2\!:\!10\!:\!
3\!:\!7)\}$ \\ \hline
$22$&$\operatorname{SFS} [S^2: (3,2) (4,1) (4,-3)]$&$(\mathbb{Z},\mathbb{Z}_{8},0,\mathbb{Z})$&&$\{(1\!:\!1\!:\!3\!:\!17),(1\!:\!1\!:\!4\!:\!16),(1\!:\!3\!:\!2\!:\!16),(
2\!:\!3\!:\!14\!:\!3),(2\!:\!4\!:\!12\!:\!4),(3\!:\!5\!:\!3\!:\!11),(3\!:\!8\!:\!3\!:\!8),(4\!:\!6\!:\!6\!:\!6)\}$ \\ \hline
$22$&$\operatorname{SFS} [S^2: (3,1) (3,1) (5,-3)]$&$(\mathbb{Z},\mathbb{Z}_{3},0,\mathbb{Z})$&$0.543133962258$&$\{(1\!:\!1\!:\!5\!:\!15),(1\!:\!1\!:\!6\!:\!14),(1\!:\!5\!:\!2\!:\!14),(
2\!:\!5\!:\!10\!:\!5),(2\!:\!6\!:\!8\!:\!6),(3\!:\!4\!:\!3\!:\!12),(3\!:\!4\!:\!11\!:\!4),(3\!:\!7\!:\!5\!:\!7),(4\!:\!7\!:\!4\!:\!7)\}$ \\ \hline
	\end{longtable}
\end{center}

\begin{center}
	\begin{longtable}{|r|c|c|c|l|}
		\caption{Cyclic combinatorial $3$-manifolds of type $\mathbb{H}^2 \times \mathbb{R}$. \label{fig:TopTypesH2xR}} \\
		
		\hline
		$n$ & top. type & $ H_{*} $ & $\operatorname{TV}(7,1)^{***}$ & difference cycles of smallest complex* \\ \hline 
		\endfirsthead

		\multicolumn{4}{l}%
		{ \tablename\ \thetable{} -- continued from previous page} \\
		\hline 
		$n$ & top. type & $ H_{*} $ & $\operatorname{TV}(7,1)^{***}$ & difference cycles of smallest complex* \\ \hline 
		\endhead

		\hline \multicolumn{4}{r}{{continued on next page --}} \\
		\endfoot
		
		\hline \hline
		\endlastfoot
$18$&$\operatorname{SFS}[\mathbb{R}P^2:(2,1)(2,1)(2,1)]$&$(\mathbb{Z},\mathbb{Z} \oplus \mathbb{Z}_{2} \oplus \mathbb{Z}_{2},\mathbb{Z}_{2},0)$&&$\{(1\!:\!1\!:\!1\!:\!15),(1\!:\!
2\!:\!5\!:\!10),(1\!:\!4\!:\!3\!:\!10),(1\!:\!4\!:\!11\!:\!2),(3\!:\!4\!:\!5\!:\!6),(3\!:\!5\!:\!6\!:\!4),(3\!:\!6\!:\!3\!:\!6),(3\!:\!6\!:\!4\!:\!
5)\}$ \\ \hline
$19$&$\operatorname{SFS}[\mathbb{R}P^2:(2,1)(3,1)]$&$(\mathbb{Z},\mathbb{Z} ,\mathbb{Z}_{2},0)$&&$\{(1\!:\!1\!:\!1\!:\!16),(1\!:\!2\!:\!5\!:\!11),(1\!:\!4\!:\!3\!:\!11),(1\!:\!
4\!:\!12\!:\!2),(3\!:\!4\!:\!6\!:\!6),(3\!:\!5\!:\!3\!:\!8),(3\!:\!6\!:\!4\!:\!6),(3\!:\!6\!:\!6\!:\!4)\}$ \\ \hline
$20$&$\operatorname{SFS}[\mathbb{R}P^2:(3,1)(3,2)]$&$(\mathbb{Z},\mathbb{Z} \oplus \mathbb{Z}_{3},\mathbb{Z}_{2},0)$&&$\{(1\!:\!1\!:\!1\!:\!17),(1\!:\!2\!:\!5\!:\!12),(1\!:\!
5\!:\!2\!:\!12),(1\!:\!5\!:\!12\!:\!2),(2\!:\!5\!:\!4\!:\!9),(2\!:\!6\!:\!6\!:\!6),(2\!:\!9\!:\!4\!:\!5),(4\!:\!5\!:\!4\!:\!7)\}$ \\ \hline
$21$&$(\mathbb{R}P^2)^{\#3} \times S^1$&$(\mathbb{Z},\mathbb{Z}^{3} \oplus \mathbb{Z}_{2},\mathbb{Z}^{2} \oplus \mathbb{Z}_{2},0)$&&$\{(1\!:\!1\!:\!1\!:\!18),(1\!:\!2\!:\!
1\!:\!17),(1\!:\!3\!:\!6\!:\!11),(1\!:\!6\!:\!3\!:\!11),(1\!:\!6\!:\!11\!:\!3),(3\!:\!5\!:\!6\!:\!7),(3\!:\!5\!:\!8\!:\!5),(3\!:\!6\!:\!7\!:\!5),(3\!:\!
7\!:\!5\!:\!6)\}$ \\ \hline
$21$&$\operatorname{SFS}[\mathbb{R}P^2:(3,1)(3,1)(3,2)]$&$(\mathbb{Z},\mathbb{Z} \oplus \mathbb{Z}_{3} \oplus \mathbb{Z}_{6},\mathbb{Z}_{2},0)$&&$\{(1\!:\!1\!:\!1\!:\!18),(1\!:\!
2\!:\!5\!:\!13),(1\!:\!4\!:\!3\!:\!13),(1\!:\!4\!:\!14\!:\!2),(3\!:\!4\!:\!3\!:\!11),(3\!:\!5\!:\!6\!:\!7),(3\!:\!6\!:\!6\!:\!6),(3\!:\!6\!:\!7\!:\!5),(
3\!:\!7\!:\!5\!:\!6)\}$ \\ \hline
$21$&$\operatorname{SFS}[\mathbb{R}P^2:(3,1)(3,1)(3,1)]$&$(\mathbb{Z},\mathbb{Z} \oplus \mathbb{Z}_{3} \oplus \mathbb{Z}_{3},\mathbb{Z}_{2},0)$&&$\{(1\!:\!1\!:\!1\!:\!18),(1\!:\!
2\!:\!6\!:\!12),(1\!:\!4\!:\!4\!:\!12),(1\!:\!4\!:\!14\!:\!2),(2\!:\!5\!:\!4\!:\!10),(2\!:\!6\!:\!3\!:\!10),(3\!:\!4\!:\!10\!:\!4),(3\!:\!6\!:\!6\!:\!6),(
3\!:\!10\!:\!4\!:\!4)\}$ \\ \hline
$21$&$\operatorname{SFS}[\mathbb{K}^2:(2,1)]$&$(\mathbb{Z},\mathbb{Z}^{2} ,\mathbb{Z} \oplus \mathbb{Z}_{2},0)$&$12$&$\{(1\!:\!1\!:\!3\!:\!16),(1\!:\!1\!:\!13\!:\!6),(1\!:\!3\!:\!
11\!:\!6),(1\!:\!4\!:\!9\!:\!7),(2\!:\!3\!:\!6\!:\!10),(2\!:\!6\!:\!4\!:\!9),(2\!:\!6\!:\!10\!:\!3),(2\!:\!9\!:\!7\!:\!3),(2\!:\!10\!:\!3\!:\!
6)\}$ \\ \hline
$21$&$\operatorname{SFS}[\mathbb{K}^2:(2,1)(2,1)(2,1)]$&$(\mathbb{Z},\mathbb{Z}^{2} \oplus \mathbb{Z}_{2} \oplus \mathbb{Z}_{2},\mathbb{Z} \oplus \mathbb{Z}_{2},0)$&&$\{(1\!:\!
2\!:\!3\!:\!15),(1\!:\!2\!:\!13\!:\!5),(1\!:\!5\!:\!2\!:\!13),(1\!:\!7\!:\!4\!:\!9),(1\!:\!11\!:\!4\!:\!5),(2\!:\!3\!:\!3\!:\!13),(2\!:\!6\!:\!4\!:\!9),(
2\!:\!10\!:\!4\!:\!5),(4\!:\!6\!:\!4\!:\!7)\}$ \\ \hline
$22$&$\operatorname{SFS}[\mathbb{R}P^2:(3,1)(4,3)]$&$(\mathbb{Z},\mathbb{Z} ,\mathbb{Z}_{2},0)$&&$\{(1\!:\!1\!:\!1\!:\!19),(1\!:\!2\!:\!4\!:\!15),(1\!:\!6\!:\!13\!:\!2),(2\!:\!
4\!:\!3\!:\!13),(3\!:\!4\!:\!11\!:\!4),(3\!:\!5\!:\!5\!:\!9),(3\!:\!5\!:\!9\!:\!5),(3\!:\!6\!:\!3\!:\!10),(3\!:\!9\!:\!5\!:\!5),(4\!:\!7\!:\!4\!:\!
7)\}$ \\ \hline
$22$&$\operatorname{SFS}[D_:(3,1)(3,1)]$&$(\mathbb{Z},\mathbb{Z} \oplus \mathbb{Z}_{3},\mathbb{Z}_{2},0)$&&$\{(1\!:\!1\!:\!1\!:\!19),(1\!:\!2\!:\!10\!:\!9),(1\!:\!5\!:\!
7\!:\!9),(1\!:\!5\!:\!14\!:\!2),(2\!:\!6\!:\!8\!:\!6),(2\!:\!10\!:\!4\!:\!6),(3\!:\!5\!:\!10\!:\!4),(3\!:\!10\!:\!5\!:\!4),(4\!:\!6\!:\!7\!:\!
5)\}$ \\ \hline
$22$&$\operatorname{SFS}[\mathbb{R}P^2:(2,1)(5,1)]$&$(\mathbb{Z},\mathbb{Z} ,\mathbb{Z}_{2},0)$&&$\{(1\!:\!1\!:\!1\!:\!19),(1\!:\!2\!:\!17\!:\!2),(2\!:\!3\!:\!4\!:\!13),(2\!:\!
7\!:\!10\!:\!3),(3\!:\!4\!:\!5\!:\!10),(4\!:\!5\!:\!6\!:\!7),(4\!:\!6\!:\!5\!:\!7),(4\!:\!6\!:\!7\!:\!5),(5\!:\!6\!:\!5\!:\!6)\}$ \\ \hline
$22$&$\operatorname{SFS}[\mathbb{K}^2:(3,1)]$&$(\mathbb{Z},\mathbb{Z}^{2} ,\mathbb{Z} \oplus \mathbb{Z}_{2},0)$&$10$&$\{(1\!:\!1\!:\!3\!:\!17),(1\!:\!1\!:\!4\!:\!16),(1\!:\!3\!:\!
2\!:\!16),(2\!:\!3\!:\!14\!:\!3),(2\!:\!4\!:\!9\!:\!7),(2\!:\!7\!:\!6\!:\!7),(2\!:\!7\!:\!9\!:\!4),(3\!:\!5\!:\!3\!:\!11),(3\!:\!8\!:\!3\!:\!
8)\}$ \\ \hline
$22$&$\operatorname{SFS}[S^2:(2,1)(2,1)(3,1)(3,-4)]$&$(\mathbb{Z},\mathbb{Z} ,\mathbb{Z} ,\mathbb{Z})$&$4$&$\{(1\!:\!1\!:\!3\!:\!17),(1\!:\!1\!:\!6\!:\!14),(1\!:\!3\!:\!
1\!:\!17),(1\!:\!6\!:\!2\!:\!13),(1\!:\!7\!:\!10\!:\!4),(1\!:\!8\!:\!10\!:\!3),(2\!:\!3\!:\!7\!:\!10),(2\!:\!6\!:\!8\!:\!6),(2\!:\!10\!:\!3\!:\!
7)\}$ \\ \hline
$22$&$\operatorname{SFS}[\mathbb{R}P^2:(5,2)(5,3)]$&$(\mathbb{Z},\mathbb{Z} \oplus \mathbb{Z}_{5},\mathbb{Z}_{2},0)$&&$\{(1\!:\!1\!:\!5\!:\!15),(1\!:\!1\!:\!15\!:\!5),(1\!:\!
5\!:\!1\!:\!15),(2\!:\!3\!:\!2\!:\!15),(2\!:\!3\!:\!8\!:\!9),(2\!:\!8\!:\!4\!:\!8),(2\!:\!8\!:\!9\!:\!3),(2\!:\!9\!:\!2\!:\!9),(2\!:\!9\!:\!3\!:\!8),(
4\!:\!4\!:\!4\!:\!10)\}$ \\ \hline
$22$&$\operatorname{SFS}[(\mathbb{R}P^2)^{\#3} :(1,1)]$&$(\mathbb{Z},\mathbb{Z}^{3} ,\mathbb{Z}^{2} \oplus \mathbb{Z}_{2},0)$&&$\{(1\!:\!1\!:\!5\!:\!15),(1\!:\!1\!:\!15\!:\!5),(
1\!:\!5\!:\!1\!:\!15),(2\!:\!5\!:\!3\!:\!12),(2\!:\!8\!:\!4\!:\!8),(2\!:\!9\!:\!2\!:\!9),(2\!:\!9\!:\!3\!:\!8),(2\!:\!11\!:\!4\!:\!5),(3\!:\!8\!:\!4\!:\!
7),(3\!:\!10\!:\!4\!:\!5)\}$ \\ \hline
$22$&$\operatorname{SFS}[\mathbb{K}^2:(2,1)(2,1)]$&$(\mathbb{Z},\mathbb{Z}^{2} \oplus \mathbb{Z}_{4},\mathbb{Z} \oplus \mathbb{Z}_{2},0)$&&$\{(1\!:\!2\!:\!4\!:\!15),(1\!:\!2\!:\!
13\!:\!6),(1\!:\!4\!:\!2\!:\!15),(1\!:\!4\!:\!8\!:\!9),(1\!:\!12\!:\!3\!:\!6),(2\!:\!4\!:\!8\!:\!8),(2\!:\!12\!:\!3\!:\!5),(2\!:\!13\!:\!3\!:\!4),(3\!:\!
5\!:\!8\!:\!6)\}$ \\ \hline
	\end{longtable}
\end{center}
\vspace{-.8cm}

\begin{center}
	\begin{longtable}{|r|c|c|l|l|}
		\caption{Cyclic combinatorial $3$-manifolds that are connected sums. \label{fig:TopTypesCS}} \\
		
		\hline
		$n$ & top. type & $ H_{*} $ & difference cycles of smallest complex* & source \\ \hline 
		\endfirsthead

		\multicolumn{5}{l}%
		{ \tablename\ \thetable{} -- continued from previous page} \\
		\hline 
		$n$ & top. type & $ H_{*} $ & difference cycles of smallest complex & source \\ \hline 
		\endhead

		\hline \multicolumn{5}{r}{{continued on next page --}} \\
		\endfoot
		
		\hline \hline
		\endlastfoot
$12$&$(S^2 \times S^1)^{\#2}$&$(\mathbb{Z},\mathbb{Z}^{2} ,\mathbb{Z}^{2} ,\mathbb{Z})$&$\{(1\!:\!2\!:\!3\!:\!6),(1\!:\!2\!:\!4\!:\!5),(1\!:\!5\!:\!1\!:\!5),(2\!:\!
2\!:\!2\!:\!6),(2\!:\!3\!:\!3\!:\!4)\}$&\cite[Complex $5_{12}$]{Kuehnel85NeighbComb3MfldsDihedralAutGroup} \\ \hline
$16$&$(S^2 \times S^1)^{\#5}$&$(\mathbb{Z},\mathbb{Z}^{5} ,\mathbb{Z}^{5} ,\mathbb{Z})$&$\{(1\!:\!2\!:\!5\!:\!8),(1\!:\!2\!:\!6\!:\!7),(1\!:\!3\!:\!4\!:\!8),(1\!:\!
3\!:\!5\!:\!7),(2\!:\!5\!:\!3\!:\!6),(2\!:\!6\!:\!2\!:\!6),(3\!:\!4\!:\!4\!:\!5)\}$&\cite[Complex $^3 16^{1}_{41}$]{Lutz11TrigMnflds} \\ \hline
$18$&$(S^2 \times S^1)^{\#7}$&$(\mathbb{Z},\mathbb{Z}^{7} ,\mathbb{Z}^{7} ,\mathbb{Z})$&$\{(1\!:\!1\!:\!7\!:\!9),(1\!:\!1\!:\!8\!:\!8),(1\!:\!7\!:\!2\!:\!8),(2\!:\!3\!:\!4\!:\!9),(2\!:\!3\!:\!6\!:\!7),(3\!:\!3\!:\!3\!:\!9),(3\!:\!4\!:\!5\!:\!6),(4\!:\!5\!:\!4\!:\!5)\}$& \\ \hline
$18$&$(S^2 \dtimes S^1)^{\#7}$&$(\mathbb{Z},\mathbb{Z}^{7} ,\mathbb{Z}^{6} \oplus \mathbb{Z}_{2},0)$&$\{(1\!:\!1\!:\!7\!:\!9),(1\!:\!1\!:\!9\!:\!7),(1\!:\!7\!:\!
1\!:\!9),(2\!:\!3\!:\!4\!:\!9),(2\!:\!3\!:\!6\!:\!7),(3\!:\!3\!:\!3\!:\!9),(3\!:\!4\!:\!5\!:\!6),(4\!:\!5\!:\!4\!:\!5)\}$& \\ \hline
$20$&$(S^2 \times S^1)^{\#6}$&$(\mathbb{Z},\mathbb{Z}^{6} ,\mathbb{Z}^{6} ,\mathbb{Z})$&$\{(1\!:\!1\!:\!3\!:\!15),(1\!:\!1\!:\!4\!:\!14),(1\!:\!3\!:\!5\!:\!11),(
1\!:\!5\!:\!5\!:\!9),(1\!:\!8\!:\!2\!:\!9),(2\!:\!3\!:\!7\!:\!8),(2\!:\!4\!:\!5\!:\!9),(3\!:\!5\!:\!5\!:\!7),(3\!:\!7\!:\!3\!:\!7)\}$& \\ \hline
$20$&$(S^2 \dtimes S^1)^{\#6}$&$(\mathbb{Z},\mathbb{Z}^{6} ,\mathbb{Z}^{5} \oplus \mathbb{Z}_{2},0)$&$\{(1\!:\!1\!:\!3\!:\!15),(1\!:\!1\!:\!8\!:\!10),(1\!:\!3\!:\!
7\!:\!9),(1\!:\!4\!:\!6\!:\!9),(1\!:\!8\!:\!5\!:\!6),(1\!:\!9\!:\!4\!:\!6),(2\!:\!3\!:\!5\!:\!10),(3\!:\!5\!:\!5\!:\!7),(3\!:\!7\!:\!3\!:\!7)\}$& \\ \hline
$20$&$(S^2 \times S^1)^{\#4}$&$(\mathbb{Z},\mathbb{Z}^{4} ,\mathbb{Z}^{4} ,\mathbb{Z})$&$\{(1\!:\!2\!:\!2\!:\!15),(1\!:\!2\!:\!4\!:\!13),(1\!:\!4\!:\!5\!:\!10),(
1\!:\!6\!:\!4\!:\!9),(1\!:\!9\!:\!1\!:\!9),(2\!:\!2\!:\!4\!:\!12),(2\!:\!6\!:\!9\!:\!3),(3\!:\!4\!:\!4\!:\!9),(4\!:\!5\!:\!5\!:\!6)\}$& \\ \hline
$20$&$(S^2 \dtimes S^1)^{\#9}$&$(\mathbb{Z},\mathbb{Z}^{9} ,\mathbb{Z}^{8} \oplus \mathbb{Z}_{2},0)$&$\{(1\!:\!2\!:\!7\!:\!10),(1\!:\!2\!:\!8\!:\!9),(1\!:\!4\!:\!
5\!:\!10),(1\!:\!4\!:\!11\!:\!4),(1\!:\!10\!:\!5\!:\!4),(2\!:\!6\!:\!2\!:\!10),(2\!:\!6\!:\!6\!:\!6),(2\!:\!7\!:\!3\!:\!8),(3\!:\!7\!:\!3\!:\!
7)\}$& \\ \hline
$21$&$(S^2 \times S^1)^{\#12}$&$(\mathbb{Z},\mathbb{Z}^{12} ,\mathbb{Z}^{12} ,\mathbb{Z})$&$\{(1\!:\!2\!:\!4\!:\!14),(1\!:\!2\!:\!11\!:\!7),(1\!:\!6\!:\!3\!:\!11),(
1\!:\!9\!:\!4\!:\!7),(2\!:\!4\!:\!7\!:\!8),(3\!:\!3\!:\!3\!:\!12),(3\!:\!4\!:\!5\!:\!9),(3\!:\!6\!:\!7\!:\!5),(3\!:\!9\!:\!4\!:\!5)\}$& \\ \hline
$22$&$(S^2 \dtimes S^1)^{\#12}$&$(\mathbb{Z},\mathbb{Z}^{12} ,\mathbb{Z}^{11} \oplus \mathbb{Z}_{2},0)$&$\{(1\!:\!1\!:\!9\!:\!11),(1\!:\!1\!:\!10\!:\!10),(1\!:\!
9\!:\!2\!:\!10),(2\!:\!3\!:\!6\!:\!11),(2\!:\!3\!:\!8\!:\!9),(3\!:\!4\!:\!4\!:\!11),(3\!:\!4\!:\!11\!:\!4),(3\!:\!6\!:\!5\!:\!8),(3\!:\!11\!:\!4\!:\!4),(
5\!:\!6\!:\!5\!:\!6)\}$& \\ \hline

	\end{longtable}
\end{center}
\vspace{-.8cm}

\tiny
\begin{center}
	\begin{longtable}{|r|c@{}|@{}c@{}|@{}c@{}|@{}l|}
		\caption{Cyclic combinatorial $3$-dimensional graph manifolds$^{**}$ including the homology sphere $\operatorname{HS}$. \label{fig:TopTypesOther}} \\
		
		\hline
		$n$ & top. type & $ H_{*} $ & $\operatorname{TV}(7,1)^{***}$ & difference cycles of smallest complex* \\ \hline 
		\endfirsthead

		\multicolumn{4}{l}%
		{ \tablename\ \thetable{} -- continued from previous page} \\
		\hline 
		$n$ & top. type & $ H_{*} $ & $\operatorname{TV}(7,1)^{***}$ & difference cycles of smallest complex* \\ \hline 
		\endhead

		\hline \multicolumn{4}{r}{{continued on next page --}} \\
		\endfoot
		
		\hline \hline
		\endlastfoot
$20$&$\operatorname{SFS}[D:(3,1)(3,1)] \cup_{m} \operatorname{SFS}[D:(3,1)(3,1)], m = \left ( \begin{array}{cc} -4 & 5 \\ -3 & 4 \end{array} \right )$&$(\mathbb{Z},\mathbb{Z}_{3} \oplus \mathbb{Z}_{3},0,\mathbb{Z})$&$0.0750935889735$&$\{(1\!:\!1\!:\!3\!:\!15),(1\!:\!1\!:\!4\!:\!14),(1\!:\!3\!:\!4\!:\!12),(1\!:\!5\!:\!2\!:\!12),(2\!:\!3\!:\!6\!:\!9),(2\!:\!4\!:\!9\!:\!5),(2\!:\!9\!:\!3\!:\!6),(3\!:\!4\!:\!4\!:\!9)\}$ \\ \hline
$22$&$\operatorname{HS}$&$(\mathbb{Z},0,0,\mathbb{Z})$&$0.0213064178104$&$\{(1\!:\!1\!:\!1\!:\!19),(1\!:\!2\!:\!4\!:\!15),(1\!:\!4\!:\!8\!:\!9),(1\!:\!4\!:\!15\!:\!2),(1\!:\!6\!:\!6\!:\!
9),(2\!:\!4\!:\!10\!:\!6),(2\!:\!5\!:\!6\!:\!9),(2\!:\!9\!:\!2\!:\!9),(2\!:\!9\!:\!5\!:\!6),(4\!:\!4\!:\!4\!:\!10)\}$ \\ \hline
$22$&$\operatorname{SFS}[D: (2,1) (2,1)] \cup_{m} \operatorname{SFS}[D: (2,1) (3,1)], m = \left ( \begin{array}{cc} -5 & 11 \\ -4 & 9 \end{array} \right )$&$(\mathbb{Z},\mathbb{Z}_{3},0,\mathbb{Z})$&$1.55495813209$&$\{(1\!:\!1\!:\!1\!:\!19),(1\!:\!2\!:\!5\!:\!14),(1\!:\!4\!:\!3\!:\!14),(1\!:\!4\!:\!15\!:\!2),(
3\!:\!4\!:\!6\!:\!9),(3\!:\!5\!:\!3\!:\!11),(3\!:\!6\!:\!4\!:\!9),(3\!:\!6\!:\!9\!:\!4),(3\!:\!8\!:\!3\!:\!8),(4\!:\!6\!:\!6\!:\!6)\}$ \\ \hline
$22$&$\operatorname{SFS}[D: (2,1) (3,1)] \cup_{m} \operatorname{SFS}[D: (3,1) (3,1)], m = \left ( \begin{array}{cc} -8 & 11 \\ -5 & 7 \end{array} \right )$&$(\mathbb{Z},\mathbb{Z} ,\mathbb{Z} ,\mathbb{Z})$&$1.87111923986$&$\{(1\!:\!1\!:\!1\!:\!19),(1\!:\!2\!:\!5\!:\!14),(1\!:\!7\!:\!12\!:\!2),(2\!:\!4\!:\!5\!:\!
11),(2\!:\!4\!:\!11\!:\!5),(2\!:\!5\!:\!4\!:\!11),(2\!:\!8\!:\!2\!:\!10),(2\!:\!12\!:\!3\!:\!5),(4\!:\!5\!:\!4\!:\!9),(5\!:\!6\!:\!5\!:\!6)\}$ \\ \hline
$22$&$\operatorname{SFS}[A:(2,1)(2,1)] / m, m = \left ( \begin{array}{cc} 1 & -11 \\ 1 & -10 \end{array} \right )$&$(\mathbb{Z},\mathbb{Z}^{2} ,\mathbb{Z} \oplus \mathbb{Z}_{2},0)$&$23.3297487925$&$\{(1\!:\!1\!:\!4\!:\!16),(1\!:\!1\!:\!11\!:\!
9),(1\!:\!4\!:\!8\!:\!9),(1\!:\!5\!:\!6\!:\!10),(2\!:\!4\!:\!10\!:\!6),(2\!:\!9\!:\!2\!:\!9),(2\!:\!9\!:\!5\!:\!6),(3\!:\!4\!:\!3\!:\!12),(3\!:\!4\!:\!
8\!:\!7),(3\!:\!7\!:\!8\!:\!4)\}$ \\ \hline
	\end{longtable}
\end{center}

\begin{center}
	\begin{longtable}{|l|c|c|c|c|c|c|c|c|c|c|c|c|c|c|c|c|c|c|}
		\caption[Topological types of cyclic $3$-manifolds ordered by number of vertices $n$]{Topological types of cyclic combinatorial $3$-manifolds ordered by number of vertices $n$. \label{fig:top}} \\
		
		\hline
		topological type & $n = 5$ & $6$ & $7$ & $8$ & $9$ & $10$ & $11$ & $12$ & $13$ & $14$ & $15$ & $16$ & $17$ & $18$ & $19$ & $20$ & $21$ & $22$ 
		\\ \hline 
		\endfirsthead

		\multicolumn{19}{l}%
		{ \tablename\ \thetable{} -- continued from previous page} \\
		\hline 
		topological type & $n = 5$ & $6$ & $7$ & $8$ & $9$ & $10$ & $11$ & $12$ & $13$ & $14$ & $15$ & $16$ & $17$ & $18$ & $19$ & $20$ & $21$ & $22$ 
		\\ 
		\hline 
		\endhead

		\hline \multicolumn{19}{r}{{continued on next page --}} \\
		\endfoot

		\hline \hline
		\endlastfoot
$S^3$&$ \times $&$ \times $&$ \times $&$ \times $&$ \times $&$ \times $&$ \times $&$ \times $&$ \times $&$ \times $&$ \times $&$ \times $&$ \times $&$ \times $&$ \times $&$ \times $&$ \times $&$ \times $\\ \hline
$S^3/ \operatorname{SL}(2,3)$&&&&&&&&&&&&$ \times $&&&&&&$ \times $\\ \hline
$\Sigma^3$&&&&&&&&&&&&&$ \times $&&&&&$ \times $\\ \hline
$P_2 =S^3/Q_8$&&&&&&&&&&&$ \times $&&&$ \times $&&&$ \times $&\\ \hline
$P_4 =S^3/Q_{16}$&&&&&&&&&&&&&&&&&&$ \times $\\ \hline
$P_7 =S^3/Q_{28}$&&&&&&&&&&&&&&&&$ \times $&&\\ \hline
$P_8 =S^3/Q_{32}$&&&&&&&&&&&&&&&&&&$ \times $\\ \hline
$L(3,1)$&&&&&&&&&&$ \times $&&$ \times $&&&&$ \times $&$ \times $&$ \times $\\ \hline
$L(5,1)$&&&&&&&&&&&&&&$ \times $&&&&$ \times $\\ \hline
$L(7,1)$&&&&&&&&&&&&&&&&&&$ \times $\\ \hline
$L(8,3)$&&&&&&&&&&&&&&$ \times $&&$ \times $&&$ \times $\\ \hline
$L(15,4)$&&&&&&&&&&&&&&&&&&$ \times $\\ \hline
$S^2 \times S^1$&&&&&&$ \times $&&$ \times $&&$ \times $&&$ \times $&&$ \times $&&$ \times $&&$ \times $\\ \hline
$S^2 \dtimes S^1$&&&&&$ \times $&$ \times $&$ \times $&$ \times $&$ \times $&$ \times $&$ \times $&$ \times $&$ \times $&$ \times $&$ \times $&$ \times $&$ \times $&$ \times $\\ \hline
$\mathbb{R}P^2 \times S^1$&&&&&&&&&&&&&$ \times $&&$ \times $&$ \times $&$ \times $&$ \times $\\ \hline
$\mathbb{R}P^3$&&&&&&&&&&&$ \times $&&$ \times $&$ \times $&&&$ \times $&$ \times $\\ \hline
$\mathbb{T}^3$&&&&&&&&&&&$ \times $&$ \times $&$ \times $&$ \times $&$ \times $&$ \times $&$ \times $&$ \times $\\ \hline
$\mathfrak{B}_2$&&&&&&&&&&&&$ \times $&$ \times $&$ \times $&$ \times $&$ \times $&$ \times $&$ \times $\\ \hline
$\mathfrak{B}_4$&&&&&&&&&&&&&&$ \times $&&$ \times $&&$ \times $\\ \hline
$\mathfrak{G}_2$&&&&&&&&&&&&&&&&$ \times $&&\\ \hline
$\mathfrak{G}_3$&&&&&&&&&&&&&&&&&$ \times $&\\ \hline
$\mathbb{K}^2 \times S^1$&&&&&&&&&&&&&&$ \times $&&$ \times $&$ \times $&$ \times $\\ \hline
$\Sigma(2,3,7)$&&&&&&&&&&&&&&&$ \times $&&&\\ \hline
$(S^2 \times S^1)^{\#2}$&&&&&&&&$ \times $&&&&&&&&&&\\ \hline
$(S^2 \times S^1)^{\#4}$&&&&&&&&&&&&&&&&$ \times $&&\\ \hline
$(S^2 \times S^1)^{\#5}$&&&&&&&&&&&&$ \times $&&&&&&\\ \hline
$(S^2 \times S^1)^{\#6}$&&&&&&&&&&&&&&&&$ \times $&&\\ \hline
$(S^2 \dtimes S^1)^{\#6}$&&&&&&&&&&&&&&&&$ \times $&&\\ \hline
$(S^2 \times S^1)^{\#7}$&&&&&&&&&&&&&&$ \times $&&&&\\ \hline
$(S^2 \dtimes S^1)^{\#7}$&&&&&&&&&&&&&&$ \times $&&&&\\ \hline
$(S^2 \dtimes S^1)^{\#9}$&&&&&&&&&&&&&&&&$ \times $&&\\ \hline
$(S^2 \times S^1)^{\#12}$&&&&&&&&&&&&&&&&&$ \times $&$ \times $\\ \hline
$(S^2 \dtimes S^1)^{\#12}$&&&&&&&&&&&&&&&&&&$ \times $\\ \hline
$\operatorname{SFS} [\mathbb{T}^2: (1,1)]$&&&&&&&&&&&&&&$ \times $&&&&\\ \hline
$\operatorname{SFS} [\mathbb{T}^2: (1,7)]$&&&&&&&&&&&&&&&&&$ \times $&\\ \hline
$(\mathbb{R}P^2)^{\#3} \times S^1$&&&&&&&&&&&&&&&&&$ \times $&\\ \hline
$\operatorname{SFS} [D_: (3,1) (3,1)]$&&&&&&&&&&&&&&&&&&$ \times $\\ \hline
$\operatorname{SFS} [\mathbb{K}^2/n2: (1,5)]$&&&&&&&&&&&&&&&&$ \times $&&\\ \hline
$\operatorname{SFS} [\mathbb{K}^2: (2,1) (2,1) (2,1)]$&&&&&&&&&&&&&&&&&$ \times $&\\ \hline
$\operatorname{SFS} [\mathbb{K}^2: (2,1) (2,1)]$&&&&&&&&&&&&&&&&&&$ \times $\\ \hline
$\operatorname{SFS} [\mathbb{K}^2: (2,1)]$&&&&&&&&&&&&&&&&&$ \times $&$ \times $\\ \hline
$\operatorname{SFS} [\mathbb{K}^2: (3,1)]$&&&&&&&&&&&&&&&&&&$ \times $\\ \hline
$\operatorname{SFS} [(\mathbb{R}P^2)^{\#3}: (1,1)]$&&&&&&&&&&&&&&&&&&$ \times $\\ \hline
$\operatorname{SFS} [\mathbb{R}P^2: (2,1) (2,1) (2,1)]$&&&&&&&&&&&&&&$ \times $&&&$ \times $&\\ \hline
$\operatorname{SFS} [\mathbb{R}P^2: (2,1) (3,1)]$&&&&&&&&&&&&&&&&&&$ \times $\\ \hline
$\operatorname{SFS} [\mathbb{R}P^2: (2,1) (5,1)]$&&&&&&&&&&&&&&&&&&$ \times $\\ \hline
$\operatorname{SFS} [\mathbb{R}P^2: (3,1) (3,1) (3,1)]$&&&&&&&&&&&&&&&&&$ \times $&\\ \hline
$\operatorname{SFS} [\mathbb{R}P^2: (3,1) (3,1) (3,2)]$&&&&&&&&&&&&&&&&&$ \times $&\\ \hline
$\operatorname{SFS} [\mathbb{R}P^2: (3,1) (3,2)]$&&&&&&&&&&&&&&&&$ \times $&&$ \times $\\ \hline
$\operatorname{SFS} [\mathbb{R}P^2: (3,1) (4,3)]$&&&&&&&&&&&&&&&&&&$ \times $\\ \hline
$\operatorname{SFS} [\mathbb{R}P^2: (5,2) (5,3)]$&&&&&&&&&&&&&&&&&&$ \times $\\ \hline
$\operatorname{SFS} [S^2: (2,1) (2,1) (2,1) (3,-5)]$&&&&&&&&&&&&&&&&&$ \times $&\\ \hline
$\operatorname{SFS} [S^2: (2,1) (2,1) (3,1) (3,-2)]$&&&&&&&&&&&&&&&&&&$ \times $\\ \hline
$\operatorname{SFS} [S^2: (2,1) (2,1) (3,1) (3,-4)]$&&&&&&&&&&&&&&&&&&$ \times $\\ \hline
$\operatorname{SFS} [S^2: (3,1) (3,1) (4,-3)]$&&&&&&&&&&&&&&&&$ \times $&&\\ \hline
$\operatorname{SFS} [S^2: (3,1) (3,1) (5,-3)]$&&&&&&&&&&&&&&&&&&$ \times $\\ \hline
$\operatorname{SFS} [S^2: (3,1) (3,1) (9,-7)]$&&&&&&&&&&&&&&&&&&$ \times $\\ \hline
$\operatorname{SFS} [S^2: (3,2) (3,2) (3,-1)]$&&&&&&&&&&&&&&&&&$ \times $&\\ \hline
$\operatorname{SFS} [S^2: (3,2) (4,1) (4,-3)]$&&&&&&&&&&&&&&&&&&$ \times $\\ \hline
$\operatorname{SFS} [S^2: (4,1) (4,1) (4,-3)]$&&&&&&&&&&&&&&&&&$ \times $&\\ \hline
$\operatorname{SFS} [S^2: (4,1) (5,2) (5,-3)]$&&&&&&&&&&&&&&&&&&$ \times $\\ \hline
$\operatorname{SFS} [S^2: (5,1) (5,1) (5,-4)]$&&&&&&&&&&&&&&&&&$ \times $&\\ \hline
$\operatorname{SFS}[D:(3,1)(3,1)] \cup_{m} \operatorname{SFS}[D:(3,1)(3,1)], m = \left ( \begin{array}{cc} -4 & 5 \\ -3 & 4 \end{array} \right )$&&&&&&&&&&&&&&&&$ \times $&&\\ \hline
$\operatorname{HS}$&&&&&&&&&&&&&&&&&&$ \times $\\ \hline
$\operatorname{SFS}[D: (2,1) (2,1)] \cup_{m} \operatorname{SFS}[D: (2,1) (3,1)], m = \left ( \begin{array}{cc} -5 & 11 \\ -4 & 9 \end{array} \right )$&&&&&&&&&&&&&&&&&&$ \times $\\ \hline
$\operatorname{SFS}[D: (2,1) (3,1)] \cup_{m} \operatorname{SFS}[D: (3,1) (3,1)], m = \left ( \begin{array}{cc} -8 & 11 \\ -5 & 7 \end{array} \right )$&&&&&&&&&&&&&&&&&&$ \times $\\ \hline
$\operatorname{SFS}[A:(2,1)(2,1)] / m, m = \left ( \begin{array}{cc} 1 & -11 \\ 1 & -10 \end{array} \right )$&&&&&&&&&&&&&&&&&&$ \times $\\ \hline
	\end{longtable}
\end{center}

\vspace{-.7cm}
\noindent
$^*$ The smallest complex is the lexicographically (with respect to the difference cycles) minimal complex of all complexes of a given topological type with the smallest number of vertices.

\noindent
$^{**}$ Graph manifolds consist of Seifert fibered spaces with toroidal boundary components, glued together along homeomorphisms of the boundary components given by an element of the mapping class group of the torus $m \in \operatorname{SL}(2,\mathbb{Z})$. $D$ denotes a disc and $A$ an annulus.

\noindent
$^{***}$ The symbol $\operatorname{TV}(7,1)$ denotes the Turaev-Viro invariant (see \cite{Turaev92TuraevViroInvariant}) with parameters \texttt{r = 7} and \texttt{whichRoot = 1} as indicated in the documentation of \texttt{regina}.
\end{landscape}

\normalsize
	Such a manifold is determined by how the canonical meridian $m^{+}$ and longitude $\ell^{+}$ of $\operatorname{HS}^{+}$ are glued to the boundary of 
	$\operatorname{HS}^{-}$. Following the conventions of \cite{Saveliev02InvariantsOfHomology3Spheres} a gluing is given by integers $a$, $b$, $c$ and $d$
	with $ad -bc = -1$ and $\ell^{+}$ is glued to $a \cdot \ell^{-} + c \cdot m^{-}$ and $m^{+}$ is glued to $b \cdot \ell^{-} + d \cdot m^{-}$ where $c = \pm 1$ whenever the
	resulting manifold is a homology sphere. The canonical meridians of both $\operatorname{HS}^{+}$ and 
	$\operatorname{HS}^{-}$ were computed using \texttt{simpcomp} and are given by the thick lines in Figure \ref{fig:homSphere}. It follows that
	$m^{+}$ is glued to $-m^{-}$, thus $b=0$, $d = -1$, and as a result $a = 1$. This determines the topological type of $\operatorname{HS}$.

	Alternatively, using Matveev's \texttt{Three-manifold Recognizer} \cite{Matveev13Recognizer} $\operatorname{HS}$ is identified as the graph manifold
	$$ \operatorname{SFS}[D: (2,1) (3,1)] \cup_{m} \operatorname{SFS}[D: (2,1) (3,1)], m = \left ( \begin{array}{cc} -10 & 11 \\ -9 & 10 \end{array} \right ), $$
	here given in the notation used by \texttt{regina}.

	\medskip
	All complexes with fewer than 18 vertices have already been described in literature. See the indicated sources in Table \ref{fig:TopTypesSpherical}, \ref{fig:TopTypesS2xR}, \ref{fig:TopTypesFlat} and \ref{fig:TopTypesCS}. The remaining topological types of cyclic $3$-manifolds were identified using \texttt{regina}, the (orientable) graph manifolds were additionally checked using the \texttt{Three-manifold Recognizer}. The notation for the Seifert fibered spaces as well as the graph manifolds is following the one \texttt{regina} is using which in turn is based on work by Burton \cite{Burton07EnumNonOr3Mflds} and Orlik \cite[pg. 88]{Orlik72SeifertMflds} (note that the \texttt{Three-manifold Recognizer} is using a slightly different notation). To make sure that none of the Seifert fibered spaces or graph manifolds equal any other topological type of combinatorial $3$-manifold previously described in the classification, we additionally computed the Turaev-Viro invariant of the manifolds (see \cite{Turaev92TuraevViroInvariant}) whenever necessary. See the documentation of \texttt{regina} or one of the indicated sources for more information.

It is interesting to see that some of the homological types of the complexes do not occur for certain integers. Especially, if $n$ is a prime number, the number of topologically distinct complexes seems to be limited. In particular, we believe the following to be true.

\begin{conj}
	Let $M$ be a combinatorial $3$-manifold with transitive cyclic symmetry homeomorphic to $S^2 \times S^1$. Then $M$ has an even number of vertices.
\end{conj}

\section{Further results}
\label{sec:lensspace}

A direct consequence from the extension of the classification of transitive cyclic combinatorial manifolds together with Theorem \ref{main} is the following result.

\begin{kor}
	\label{kor:denseSeries}
	There are exactly $396$ combinatorially distinct dense infinite families of combinatorial $3$-manifolds starting with a triangulation with fewer than $23$ vertices.
\end{kor}

The results from Section \ref{sec:inf} allow us to formulate a number of further results similar to Corollary \ref{kor:denseSeries} using the data of the classification. However, these infinite families typically only contain a few distinct topological types of $3$-manifolds: Most of the infinite families have members of type $S^2 \times S^1$ or $S^2 \dtimes S^1$, family number $17$ (\texttt{SCSeriesK(17,k)} in \textsf{simpcomp}) has members of type $\mathbb{T}^3$ and $\mathfrak{B}_2$, and families number $30$, $42$ and $356$ (\texttt{SCSeriesK(30,k)}, \texttt{SCSeriesK(42,k)} and \texttt{SCSeriesK(356,k)} in \textsf{simpcomp}) contain combinatorial $3$-manifolds of three further topological types. This observation is emphasized by the following upper bound on the Betti numbers of the members of a dense infinite family of combinatorial $3$-manifolds.

\begin{prop}
	\label{prop:boundedBetti}
	Let $M_k = \{ d_{1,k} , \ldots , d_{m,k} \}$ with $d_{i,k} = ( a_i^0 : a_i^1 : a_i^2 : a_i^{3} + k )$ be a dense infinte family of transitve cyclic combinatorial $3$-manifolds with $n+k$ vertices. Then there exists a constant $c$ such that
	$$ \sum \limits_{i=0}^{d} \beta_i (M_0) + c \geq \sum \limits_{i=0}^{d} \beta_i (M_k) $$
	for all $k \geq 0$.
\end{prop}

\begin{proof}
	By Kuiper's discrete Morse relations (see Section \ref{sec:prelims} or \cite{Kuiper71MorseRelations}) we know that the number of critical points of any rsl-function on $M_k$ is an upper bound for the sum of the Betti numbers of $M_k$. We will prove Proposition \ref{prop:boundedBetti} by giving an upper bound on the number of critical points of the rsl-function $f_k$ on $M_k$ induced by the natural ordering of the vertices $V_k := \{ 0 , 1, \ldots , n+k-1 \}$ of $M_k$.

	In this setting, the relevant slicings $S_i$ of $M_k$ with respect to $f_k$ are given by the partitions $P_i = (\{ 0, 1, \ldots , i-1 \},\{ i , \ldots , n+k-1\})$ of $V_k$. Now, observe that the set of tetrahedra having both vertices in $\{ 0, 1, \ldots , i-1 \}$ and $\{ i , \ldots , n+k-1\}$ must be isomorphic to the set of tetrahedra having vertices in both $\{ 0, 1, \ldots , i \}$ and $\{ i+1 , \ldots , n+k-1\}$ for all $\mu < i < n+k-1-\mu$ where $\mu := \underset{1 \leq i \leq m}{\operatorname{max}} (a_i^0 + a_i^1 + a_i^2)$. This follows from the cyclic symmetry and the fact that by construction all tetrahedra of $M_k$ for all $k \geq 0$ have their vertices within an interval (modulo $(n+k)$) of length less or equal $\mu$. As a consequence, we have $S_i = S_{i+1}$ and since a vertex of a combinatorial manifold can only be critical if the topological type of the associated slicing changes when the vertex is passed, none of the vertices $\mu < i < n+k-1-\mu$ can be critical. 

	Moreover, note that whenever $\mu < n+k-1-\mu$ the combinatorial types of the remaining slicings $S_i$, $1 \leq i \leq \mu+1$ and $n+k-1-\mu \leq i \leq n+k-1$ in $M_k$ are independent of $k$ and hence the number of critical points of $f_k$ is the same for all $k$. Now since $\mu < \frac{n}{2}$ we have $\mu < n+k-1-\mu$ whenever $k \geq 1$ and the number of critical points of $f_k$ for any $k$ is bounded above by the maximum of the number of critical points of $M_0$ and $M_1$. Hence, for $c$ being the maximum of the number of critical points of $M_0$ and $M_1$ the statement follows.
\end{proof}

\bigskip
In order to find infinite families which are richer from the topological point of view, we want to use the classification of transitive cyclic combinatorial $3$-manifolds described in Section \ref{sec:class} to search for manifolds that look like the start of an infinite family of {\em $2$-neighborly} combinatorial $3$-manifolds (which thus have to contain an {\em increasing number of difference cycles}) containing {\em infinitely many} members of pairwise distinct topological types.

This is motivated by the situation in dimension $2$ where several of such infinite families exist. There is a family of neighborly orientable surfaces of genus $\frac{1}{6} {12s + 4 \choose 2}$ with $12s + 7$ vertices (cf. \cite[Fig. 2.15]{Ringel74MapColThm} and \cite[Example 2.7]{Kuhnel96CentrSymmTightSurf}) starting with the $7$-vertex M\"obius torus. In addition, many further families of transitive combinatorial $2$-manifolds with similar properties can be found in \cite{Lutz09EquivdCovTrigsSurf} by Lutz.

Note that there are infinite $2$-neighborly families of combinatorial $3$-manifolds described in the literature. However, these families have members of only a constant number of distinct topological types per family (see the boundary of the cyclic $4$-polytopes for a family of neighborly $S^3$, \cite{Kuehnel85NeighbComb3MfldsDihedralAutGroup} and \cite[Section 4.2]{Spreer10Diss} for families of sphere bundles over the circle, and \cite{Brehm09LatticeTrigE33Torus} and \cite{Kuehnel96PermDiffCyc} for neighborly $3$-dimensional tori).

%

\medskip
A detailed analysis of the data provided by the classification led to a general construction principle for infinite families of Seifert Fibred Spaces and in particular Brieskorn homology spheres with transitive cyclic symmetry and an infinite number of distinct topological types per family. However, for the remainder of this article we will focus on an infinite family of topologically distinct lens spaces which was conjectured following a different approach and refer the reader to \cite{Spreer12VarCyclicPolytopeCompExpI} where the former type of infinite family is described in detail.

\begin{satz}
	\label{thm:lensSeries}
  The complex
  \footnotesize
  \begin{eqnarray}
    \label{eq:lensSeries}
    L_k &:=& \left \{ \, (1:1:1:11+4k), (1:2:4:7+4k), (1:4:2:7+4k), (1:4:7+4k:2) \, \right \} \nonumber \\
        && \bigcup \limits_{i=0}^{k} \,\, \left \{ \, (2:5+2i:2:5+4k-2i) , (4:2+2i:4:4+4k-2i) \, \right \}
  \end{eqnarray}
  \normalsize
  is a combinatorial $3$-manifold with $n = 14+4k$, $k \geq 0$, vertices. It is  homeomorphic to the lens space $L(k^2+4k+3, k+2)$.
\end{satz}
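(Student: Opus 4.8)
The plan is to realise $L_k$ as a genus-$1$ Heegaard splitting read off from the slicing between the even- and the odd-labelled vertices, exactly as was done for the individual complexes $C\cong L(5,1)$ and $D\cong L(7,1)$ earlier in the classification proof, but now carried out uniformly in the parameter $k$. Concretely I would establish three things in turn: that $L_k$ is a closed combinatorial $3$-manifold; that the subcomplexes spanned by the even and by the odd vertices are solid tori meeting along a common Heegaard torus; and that the resulting gluing has invariants $\left((k+2)^2-1,\,k+2\right)$.

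Since $L_k$ is cyclic it suffices, for the manifold property, to check that $\operatorname{lk}_{L_k}(0)$ is a triangulated $2$-sphere, every other vertex link being isomorphic to it under the $\mathbb Z_n$-action. The link is the union, over the $6+2k$ difference cycles, of the triangles each cycle contributes at $0$ (four for a full-length cycle, cf.\ the description of $\operatorname{lk}_M(0)$ in the proof of Theorem~\ref{main}); I would show this union is a closed orientable surface in which every edge lies in exactly two triangles and whose Euler characteristic is $2$. The bookkeeping is uniform in $k$ because only the single cycle $(2:5+2k:2:5+2k)$ (the case $i=k$ of the first family) degenerates, having period $2$ and half length, while all others have full length $n$; this is the only place where the count of incident tetrahedra at $0$ drops, and it is accounted for once.

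Next I would pin down the handlebody structure. Among all the difference cycles only the family $(4:2+2i:4:4+4k-2i)$, $0\le i\le k$, has all four entries even, so each such orbit splits into an all-even and an all-odd part, whereas every remaining cycle (the first four and the whole family $(2:5+2i:2:5+4k-2i)$) consists of mixed-parity tetrahedra, i.e.\ tetrahedra crossing the slicing. Relabelling the even vertices by $v\mapsto v/2$ identifies the even-vertex subcomplex with the cyclic complex $\bigcup_{i=0}^{k}(2:1+i:2:2+2k-i)$ on $7+2k$ vertices, and I would show this is a triangulated solid torus by exhibiting a compressing (meridian) disk, so that it is an irreducible $3$-manifold with compressible torus boundary and $H_1=\mathbb Z$. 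The cyclic generator $v\mapsto v+1$ carries the even-vertex subcomplex onto the odd-vertex subcomplex, so the latter is an isomorphic solid torus; the two share the slicing torus as common boundary, and hence $L_k$ is a manifold of Heegaard genus $1$, i.e.\ a lens space once $H_1\ne 0$.

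The main obstacle is the final gluing computation, the determination of the second invariant $q$. Fixing bases $\alpha_\pm,\beta_\pm$ of the two boundary tori with $\beta_\pm$ the longitude generating $H_1$ of the respective solid torus and $\alpha_\pm$ the meridian, I would transport the meridian $\alpha_-$ across the slicing and express its class as $p\,\beta_+ + q\,\alpha_+$, the topological type then being $L(p,q)$ as in the computations for $C$ and $D$. The difficulty is to perform this transport \emph{uniformly} in $k$: the slicing now has $k+1$ symmetric ``layers'' coming from the cycles $(2:5+2i:2:5+4k-2i)$, and one must identify the winding pattern of the transported path through these layers and sum the contributions to obtain $p=\pm\left((k+2)^2-1\right)$ and $q=\mp(k+2)$. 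Two checks guide the bookkeeping: the order $|H_1|=|p|=(k+1)(k+3)=(k+2)^2-1$ can be computed independently from the determinant of the gluing matrix, and the relation $q^2=(k+2)^2=p+1\equiv 1\pmod p$ is consistent with the symmetry $v\mapsto v+1$ interchanging the two solid tori, which replaces $q$ by $q^{-1}$. Finally $L\!\left(-((k+2)^2-1),\,-(k+2)\right)\cong L\!\left((k+2)^2-1,\,k+2\right)$ by the standard lens-space equivalences, which yields the claim.
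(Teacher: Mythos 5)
Your overall strategy coincides with the paper's: verify that $\operatorname{lk}_{L_k}(0)$ is a triangulated $2$-sphere and invoke vertex-transitivity; observe that the all-even difference cycles $(4:2+2i:4:4+4k-2i)$ are exactly what spans the even- and odd-vertex subcomplexes; relabel by $v \mapsto v/2$ to identify each with $T_k = \bigcup_{i=0}^{k}(2:1+i:2:2+2k-i)$; show $T_k$ is a solid torus; and read off the lens invariants by transporting the meridian $\alpha_k^-$ across the slicing. Your parity analysis and your identification of the single half-length cycle (the case $i=k$, of length $n/2$) are correct. But there is a genuine gap at exactly the step you yourself label the ``main obstacle'': you never carry out the transport of $\alpha_k^-$; you only assert its outcome. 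That computation is the core of the theorem, and the paper does execute it: the transported path $\phi(\alpha_k^-)$ is traced through the slicing (Figure \ref{fig:HeegaardDiagram}), its edges are sorted into $(k+2)(2k+2)+2k+1$ diagonal and $k+3$ vertical segments on the grid of Figure \ref{fig:torus}, giving the lattice vector $(2k^2+8k+5,\,2k^2+9k+8)$, and solving against $\alpha_k^+=(k+2,-1)$, $\beta_k^+=(k-1,-3)$ yields $\phi(\alpha_k^-) = (k^2+3k+1)\,\alpha_k^+ - (k^2+4k+3)\,\beta_k^+$, whence $q \equiv -(k+2) \pmod{p}$. Nothing in your proposal produces these coefficients.

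Moreover, your two consistency checks cannot substitute for that computation, because they do not determine $q$. The candidate $q \equiv \pm 1$ also satisfies both $q^2 \equiv 1 \pmod{p}$ and $|H_1| = |p| = (k+1)(k+3)$, yet $L\left((k+2)^2-1,\,1\right) \not\cong L\left((k+2)^2-1,\,k+2\right)$ for all $k \geq 1$, since $k+2 \not\equiv \pm 1 \pmod{(k+1)(k+3)}$ and $(k+2)^{-1} \equiv k+2$; so a gluing passing all your checks could still be the wrong manifold, and only the explicit winding count excludes it. A secondary, smaller gap: your route to ``$T_k$ is a solid torus'' (exhibit a compressing disk, then use irreducibility plus compressible torus boundary) is also only sketched, and establishing irreducibility of $T_k$ uniformly in $k$ is not easier than the paper's direct mechanism, namely a collapsing scheme of $T_k$ onto the subcomplex $\delta_1 = (2:2:2:1+2k) \cong (1:1:1:4+2k) \cong B^2 \times S^1$.
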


\begin{proof}
Obviously, $L_k$ has $n = 14+4k$ vertices. By looking at Figure \ref{fig:lensLink} we can verify that the link $\operatorname{lk}_{L_k} (0)$ of the vertex $0$ in $L_k$ is a triangulated $2$-sphere. Hence, as $L_k$ has transitive symmetry, it follows immediately that $L_k$ is in fact a combinatorial $3$-manifold for all $k \geq 0$. Furthermore, we can see that $\operatorname{lk}_{L_k} (0)$ has $13+4k$ vertices and thus $L_k$ is $2$-neighborly.
\begin{figure}
	\begin{center}
	 \includegraphics[width=0.6\textwidth]{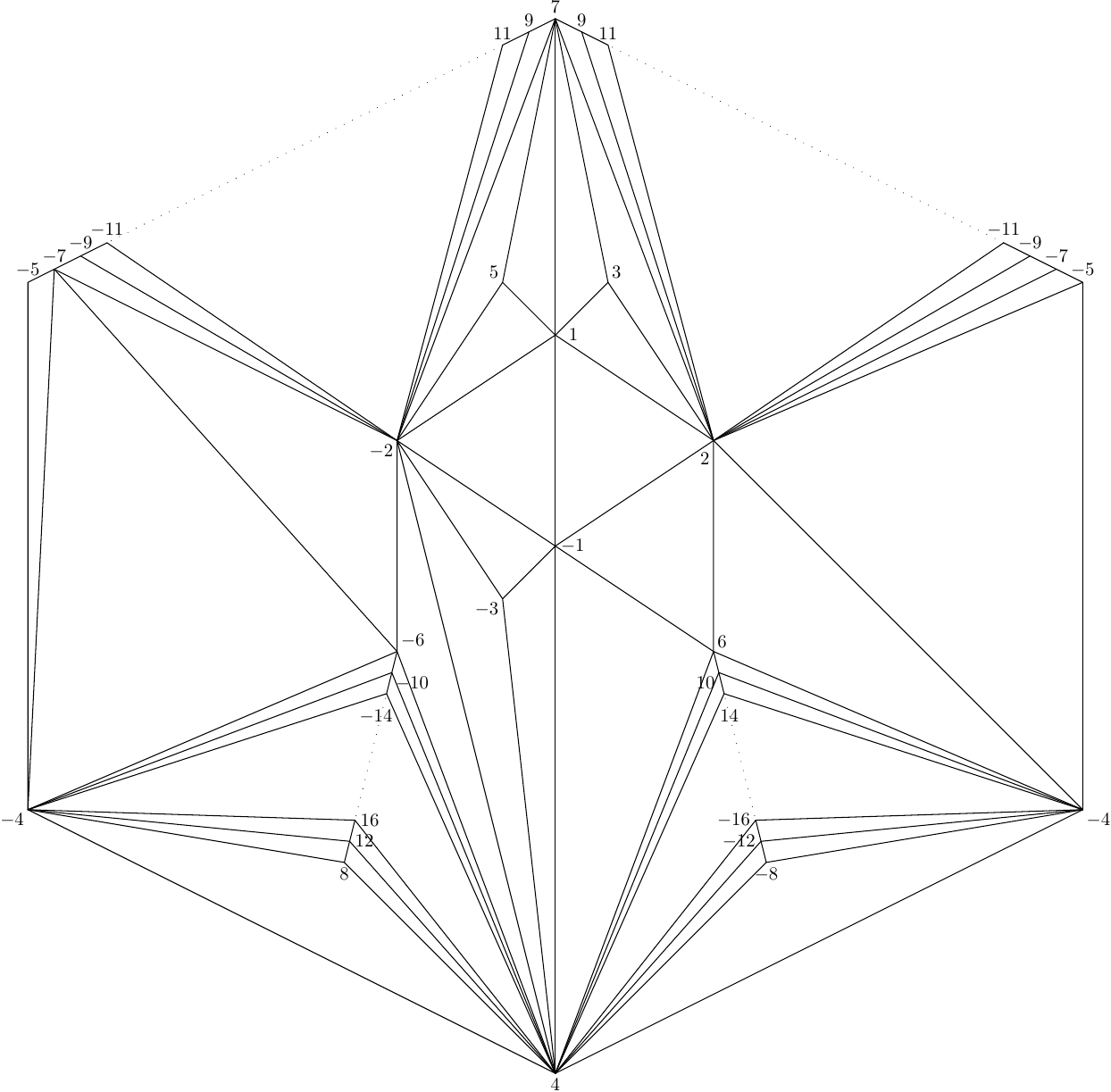}
	 \caption{Link of vertex $0$ of $L_k$ -- a triangulated $2$-sphere with $13+4k$ vertices.\label{fig:lensLink}}
	\end{center}
\end{figure}

\medskip
To determine the exact topological type of $L_k$ we will proceed as follows:
\begin{enumerate}
	\item for all $k \geq 0$, determine a Heegaard splitting of $L_k$ of genus $1$ which will be given by a decomposition of 
		$L_k$ into two disjoint solid tori $T_k^-$ and $T_k^+$ and a center piece of type cartesian product of a torus with an interval,
	\item draw the middle torus $S_k$ of the center piece as a slicing (see Figure \ref{fig:HeegaardDiagram}),
	\item choose a basis $\{\alpha_k^-, \beta_k^-\}$ for the $1$-homology of the boundary of the solid torus $T_k^-$ such that $H_1 (T_k^-) = \langle \beta_k^- \rangle$,
	\item do the same for the solid torus $T_k^+$, that is choose $\{ \alpha_k^+, \beta_k^+ \}$ such that $H_1 (\partial T_k^+) = \langle \alpha_k^+, \beta_k^+ \rangle$ and $H_1 (T_k^+) = \langle \beta_k^+ \rangle$,
	\item with the help of the slicing $S_k$, determine the class of $\alpha_k^-$ in $H_1 (\partial T_k^+)$ -- by construction this will be a torus knot which will determine the topological type of $L_k$.
\end{enumerate}

{\bf 1.} For all $k \geq 0$, the span of the even labeled vertices $T_k^- := \operatorname{span} (\{ 0, 2, \ldots , n-1 \})$ as well as the span of the odd labeled vertices $T_k^+ := \operatorname{span} (\{ 1, 3, \ldots , n \})$ (which is combinatorially isomorphic to $T_k^-$ by the cyclic symmetry) form a solid torus. 

To see this note that $T_k^-$ together with $T_k^+$ are exactly the difference cycles
$$ T_k^- \cup T_k^+ = \bigcup \limits_{i=0}^{k} \,\, \left \{ \, (4:2+2i:4:4+4k-2i) \, \right \} \subset L_k. $$
Since the gcd of $4$, $2+2i$ and $4+4k-2i$, $0 \leq i \leq k$, is $2$ for all $k \geq 0$, $T_k^-$ and $T_k^+$ are disjoint but connected and we have
$$ T_k^- \cong T_k^+ \cong \bigcup \limits_{i=0}^{k} \,\, \left \{ \, (2:1+i:2:2+2k-i) \, \right \} =: T_k . $$
For $k=0$ we have $T_0 = \{ (1:1:1:4) \} \cong B^2 \times S^1$. If $k \geq 1$, $T_k$ consists of $k+1$ difference cycles which we will denote by $\delta_i := (2:1+i:2:2+2k-i)$, $0 \leq i \leq k$. $\delta_i$ shares two triangles per tetrahedron with $\delta_{2+i}$, $0 \leq i \leq k-2$, $\delta_{k-1}$ shares two triangles per tetrahedron with $\delta_k$, $k \geq 1$, $\delta_1$ shares two triangles per tetrahedron with itself and $\delta_0$ shares two triangles per tetrahedron with $\partial T_k$ and hence contains the complete boundary of $T_k$. Altogether, we have the following collapsing sheme of $T_k$:

\bigskip
\noindent
\includegraphics[width=1.0\textwidth]{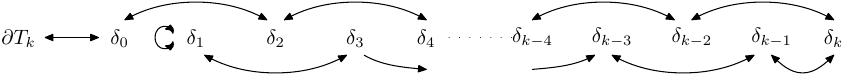}

\bigskip
\noindent
Thus, $T_k$ collapses onto $\delta_1 = (2:2:2:1+2k)$ and since $\delta_1$ contains an odd number of vertices we have $\delta_1 \cong (1:1:1:4+2k) \cong  B^2 \times S^1$. 

Finally $T_k^-$ and $T_k^+$ span all the vertices of $L_k$ and contain all tetrahedra with only even or only odd vertex labels. As a consequence the center piece between the two solid tori is of type cartesian product of a torus with an interval, $T_k^-$ and $T_k^+$ define a Heegaard splitting of $L_k$ of genus $1$ and $L_k$ is homeomorphic to the $3$-sphere, $S^2 \times S^1$ or a lens space $L(p,q)$.

\medskip
{\bf 2.} Following from the above, the slicing $S_k$ between the odd labeled and the even labeled vertices is a torus and is shown in Figure \ref{fig:HeegaardDiagram}. It is interesting to see that apart from $T_k^-$ and $T_k^+$ the difference cycles $(1:2:4:7+4k)$ and $(1:4:2:7+4k)$ are the only ones which do not contain two odd and two even labels per tetrahedron and thus are the only ones which are not sliced by $S_k$ in a quadrilateral. Hence, $S_k$ consists of only $28+8k$ triangles but $(2+k)(14+4k)+7+2k = 4k^2+24k+35$ quadrilaterals. Its complete $f$-vector is 
$$f(S_k) = (4k^2+28k+49,8k^2+60k+112, (8k+28) \Delta , (4k^2+24k+35) \Box).$$
\begin{figure}[p]
	\begin{center}
	 \includegraphics[width=1.0\textwidth]{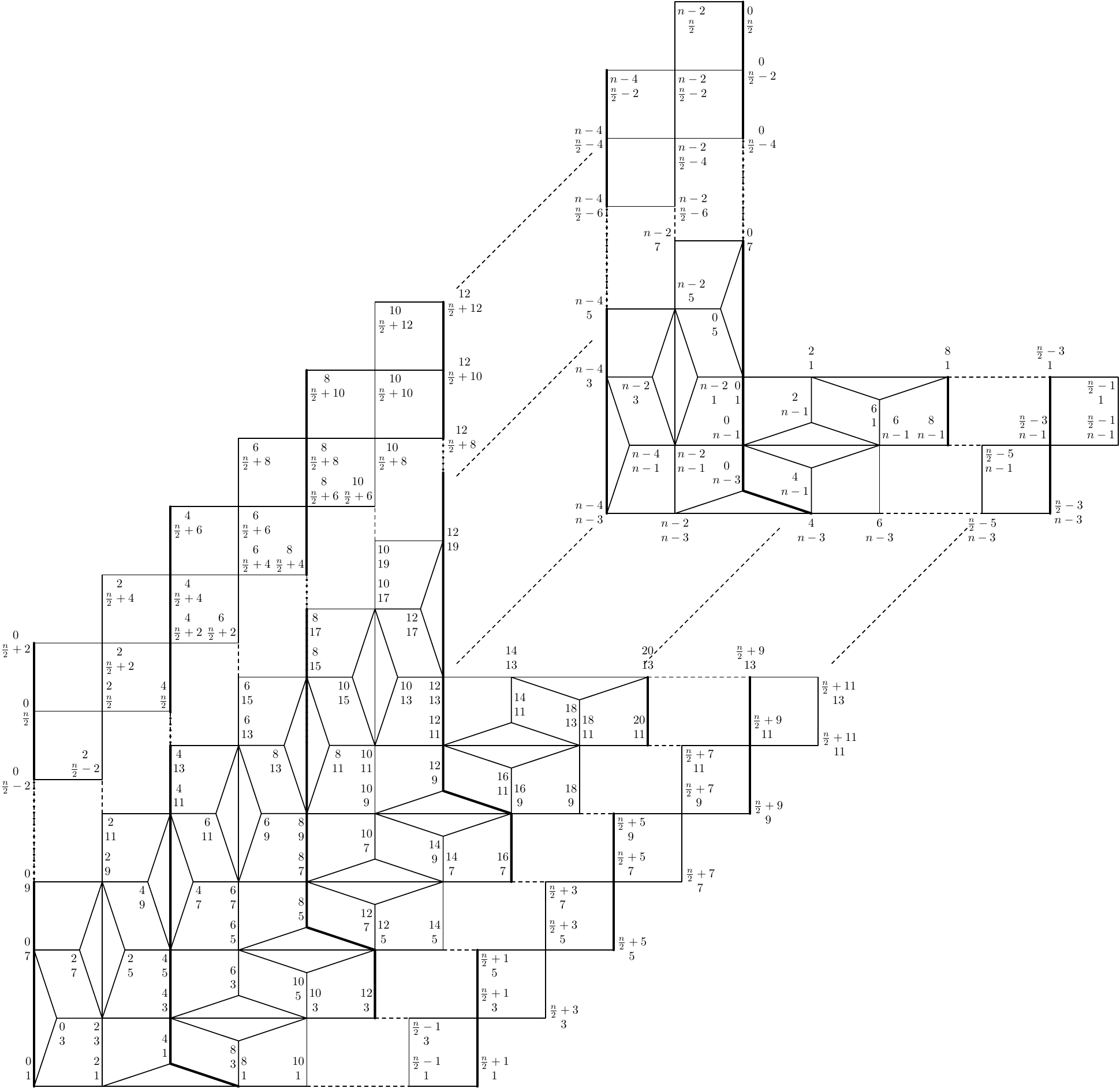}
	 \caption{Slicing of $L_k$ between the odd labeled and the even labeled vertices -- a torus.\label{fig:HeegaardDiagram}}
	\end{center}
\end{figure}

\medskip
{\bf 3.} and {\bf 4.} In order to find a suitable basis of $H_1 (\partial T_k^-)$ as indicated above, let us first take a look at $\partial T_k^-$ itself which is shown in Figure \ref{fig:torus}. 
\begin{figure}
	\begin{center}
	 \includegraphics[width=0.8\textwidth]{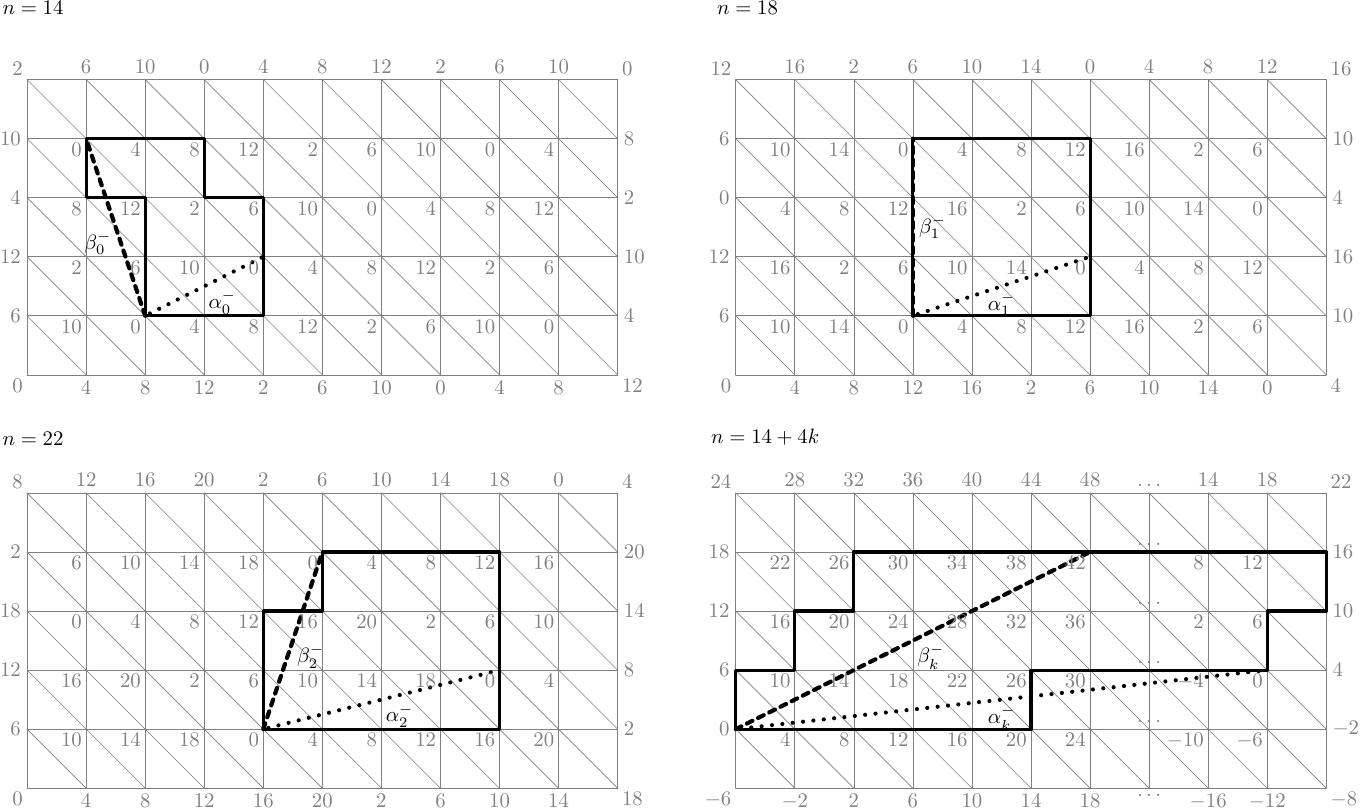}
	 \caption{Fundamental domain of the boundary of $T_k^-$ together with the basis $\{ \alpha_k^-, \beta_k^- \}$ of $H_1 (\partial T_k^-)$ for selected values of $k$ and in greater generality.\label{fig:torus}}
	\end{center}
\end{figure}
We choose the basis $\{ \alpha_k^-, \beta_k^- \}$ of $H_1 (\partial T_k^-)$ to be
\begin{eqnarray}
	\alpha_k^- &=& [ 0, 4, 8, \ldots , n-6, 0 ] \nonumber \\	
	\beta_k^- &=& [ 0, 6, 12, 18, 22, 26, \ldots , n-4, 0 ] \nonumber
\end{eqnarray}
or in the case that $n < 26$ as indicated in Figure \ref{fig:torus}. By construction, $\alpha_k^-$ is contractible in $T_k^-$ and $H_1 (T_k^-) = \langle \beta_k^- \rangle$.

For $\{ \alpha_k^+, \beta_k^+ \}$ we choose analogously
\begin{eqnarray}
	\alpha_k^+ &=& [ 1, 5, 9, \ldots , n-5, 1 ] \nonumber \\	
	\beta_k^+ &=& [ 1, 7, 13, 19, 23, 27, \ldots , n-3, 1 ] \nonumber
\end{eqnarray}
and hence $H_1 (T_k^+) = \langle \beta_k^+ \rangle$.

\medskip
{\bf 5.} To finish the proof we will express $\alpha_k^-$ in terms of $\alpha_k^+$ and $\beta_k^+$. This is done by a map $\phi : H_1 (\partial T_k^-) \to H_1 (\partial T_k^+)$ which lifts any path in $L_k$ passing only even labeled vertices (a path in $\partial T_k^-$) to a homologically equivalent path passing only odd labeled vertices (a path in $\partial T_k^+$). The image of a path under $\phi$ can be determined with the help of the slicing $S_k$. In the case of $\alpha_k^-$ it is the thick line in Figure \ref{fig:HeegaardDiagram} and results in the following path:
\begin{equation}
	\begin{array}{llll}
	\phi (\alpha_k^-) &= [ & n-7, n-9, n-11, \ldots , 9, 7, 1, n-1, n-3, &\\
	&& n-3, n-5, n-7, \ldots , 13, 11, 5, 3, 1, &\\
	&& 1, n-1, n-3 \ldots , 17, 15, 9, 7, 5, &\\
	&& \ldots &\\
	&& n-13, n-15, n-17, \ldots , 3, 1, n-5, n-7 &] .
	\end{array}
\end{equation}
By taking a closer look to Figure \ref{fig:torus} we see that all edges of a path of type $\langle s, s-2 \rangle$ in both $\partial T_k^-$ and $\partial T_k^+$ go from the left upper corner of a square of the grid to the lower right corner ($\searrow$) whereas an edge of type $\langle s, s-6 \rangle$ is simply going down in the grid ($\downarrow$). As $\phi (\alpha_k^-)$ has $(k+2)(2k+2) + 2k+1$ segments of type $\searrow$ and $k+3$ segments of type $\downarrow$, $\phi (\alpha_k^-)$ results in the vector $(2k^2+8k+5,2k^2+9k+8)$ on the integer grid with basis $(\rightarrow, \downarrow)$ (cf. Figure \ref{fig:torus} where $\partial T_k^+$ is obtained from $\partial T_k^-$ by the shift $v \mapsto (v+1) \mod n$ of all vertex labels).

On the other hand, we know that $\alpha_k^+$ corresponds to the vector $(k+2,-1)$ and $\beta_k^+$ to $(k-1,-3)$ on the grid for $\partial T_k^+$ with basis  $(\rightarrow, \downarrow)$. Thus, to express $\phi (\alpha_k^-)$ in terms of $\alpha_k^+$ and $\beta_k^+$ we have to solve the following system of equations:
\begin{equation}
	\begin{array}{lrlrll}
	\operatorname{I.}&(k+2)q &+& (k-1)p&=&2k^2+8k+5 \\ 
	\operatorname{II.}&(-1)q&+&(-3)p&=&2k^2+9k+8
	\end{array}
\end{equation}
which results in the solution
$$ q = k^2+3k+1; \quad p = -k^2-4k-3$$
and hence
$$\phi (\alpha_k^-) = (k^2+3k+1) \alpha_k^+ + (-k^2-4k-3) \beta_k^+.$$
Furthermore, note that $L(p,q_1) \cong L(p,q_2)$ if and only if $q_1 \equiv \pm q_2^{\pm 1} \mod p$ from which it follows that
$$ L_k \cong L(k^2 + 4k + 3,k+2).$$
\end{proof}

The family $L_k$ can be modified into a family of $3$-spheres which only differs from $L_k$ in the part which is disjoint to the slicing $S_k$. Hence, Theorem \ref{thm:lensSeries} shows that combinatorial surgery of infinitely many essentially different types can be applied in a setting respecting the cyclic symmetry of the underlying combinatorial manifolds. The following corollary, which is a direct implication of Theorem \ref{thm:lensSeries}, summarizes the findings of this section under a more general point of view. 

\begin{kor}
	There are infinitely many topologically distinct combinatorial (prime) $3$-manifolds with transitive cyclic symmetry.
\end{kor}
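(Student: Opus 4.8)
The plan is to read the conclusion off directly from Theorem \ref{thm:lensSeries}, which already supplies an infinite family $L_k$, $k \geq 0$, of combinatorial $3$-manifolds with transitive cyclic automorphism group together with the topological identification $L_k \cong L((k+2)^2-1, k+2)$. The only thing left to verify is that this single family already realizes infinitely many distinct topological types and that its members are prime; no new construction is needed.

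First I would compute the fundamental group (equivalently the first homology) of each member. For a lens space $L(p,q)$ with $p \geq 1$ one has $\pi_1 \cong H_1 \cong \mathbb{Z}_p$, so $\pi_1(L_k) \cong \mathbb{Z}_{(k+2)^2-1}$. Writing $p_k := (k+2)^2-1 = (k+1)(k+3)$, I would observe that $p_{k+1}-p_k = 2k+5 > 0$, so $p_k$ is strictly increasing in $k$ and hence takes infinitely many distinct values. Since homeomorphic (indeed homotopy equivalent) closed $3$-manifolds have isomorphic fundamental groups, and $\mathbb{Z}_{p_k} \not\cong \mathbb{Z}_{p_{k'}}$ whenever $p_k \neq p_{k'}$, the family $\{L_k\}_{k \geq 0}$ contains infinitely many pairwise non-homeomorphic complexes. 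This settles the infinitude claim using only the manifestly unbounded invariant $|\pi_1|$, so I never have to invoke the finer classification $L(p,q_1) \cong L(p,q_2) \iff q_1 \equiv \pm q_2^{\pm 1} \pmod{p}$ that appears inside the proof of the theorem.

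To address the parenthetical word \emph{prime}, I would note that $p_k = (k+1)(k+3) \geq 3$ for all $k \geq 0$, so every $L_k$ is a genuine lens space with finite (nontrivial) fundamental group. Such a manifold is finitely covered by $S^3$ and is therefore irreducible, hence prime. Thus all members of the family are prime, and combining this with the previous paragraph completes the argument.

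There is essentially no obstacle here, since the substantive work was carried out in Theorem \ref{thm:lensSeries}. The only point deserving a word of care is to base the topological distinction on an invariant that is visibly infinite-valued, namely the order of the fundamental group, rather than on the homeomorphism classification of lens spaces; the latter is genuinely needed to pin down each $L_k$ as a specific $L(p,q)$ in the theorem, but is superfluous once one only wishes to exhibit infinitely many distinct types.
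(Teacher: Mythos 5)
Your proposal is correct and matches the paper's intent exactly: the paper gives no separate argument, stating only that the corollary is a direct implication of Theorem \ref{thm:lensSeries}, and your write-up simply fills in the routine details (distinctness via the strictly increasing orders $(k+2)^2-1$ of the fundamental groups, and primality of lens spaces as spherical, hence irreducible, manifolds). Your observation that the full homeomorphism classification of lens spaces is not needed for this corollary is a sensible simplification but does not constitute a different approach.
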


\medskip
The author wants to thank Wolfgang K\"uhnel, Ben Burton and the reviewer for countless helpful comments.

\end{document}